\documentclass[notitlepage,reqno,12pt]{amsart}
\usepackage[left=1.5in,right=1.5in,top=1in,bottom=1.5in]{geometry}
\usepackage{titlesec}
\usepackage{graphicx} % Required for inserting images
\usepackage{latexsym,amssymb,epsfig,amsmath,amsfonts,amsthm}
\usepackage{float}

\usepackage{rotating}
\usepackage[toc,page]{appendix}
\usepackage{color}
\usepackage{multirow}
\usepackage{relsize}
\usepackage{microtype}
\usepackage{cancel}
\usepackage[foot]{amsaddr}
\usepackage[colorlinks, allcolors=blue]{hyperref}
\usepackage{subcaption}
\usepackage{algpseudocode}
\usepackage{algorithm}
\usepackage[utf8]{inputenc}
\usepackage{enumitem}
\usepackage{fancyhdr}
\usepackage{tikz}
\usepackage{ulem}
\numberwithin{equation}{section}
\usepackage{mathrsfs}
\usepackage{accents}
\usepackage{bm}
\usepackage{mathtools}
\usepackage{cleveref}
\usepackage{array}

\theoremstyle{definition}
\newtheorem{definition}{Definition}
\theoremstyle{lemma}
\newtheorem{lemma}{Lemma}
\theoremstyle{theorem}

\theoremstyle{assumption}

\theoremstyle{remark}
\newtheorem{remark}{Remark}

\pagestyle{plain}
\titleformat{\section}
{\normalfont\large\bfseries}{\thesection}{1em}{}

\pagestyle{plain}
\titleformat{\subsection}
{\normalfont\normalsize\bfseries}{\thesubsection}{1em}{}
\numberwithin{subsection}{section}

\title{A robust first order meshfree method for\\time-dependent nonlinear conservation laws}
\author{Samuel Kwan, Jesse Chan}
\date{}%December 2023}

\begin{document}

\maketitle

\begin{abstract}
\noindent We introduce a robust first order accurate meshfree method to numerically solve time-dependent nonlinear conservation laws. The main contribution of this work is the meshfree construction of first order consistent summation by parts differentiations. We describe how to efficiently construct such operators on a point cloud. We then study the performance of such differentiations, and then combine these operators with a numerical flux-based formulation to approximate the solution of nonlinear conservation laws, with focus on the advection equation and the compressible Euler equations. We observe numerically that, while the resulting mesh-free differentiation operators are only $O(h^\frac{1}{2})$ accurate in the $L^2$ norm, they achieve $O(h)$ rates of convergence when applied to the numerical solution of PDEs.
\end{abstract}

\section{Introduction}

\noindent Numerical methods for solving partial differential equations (PDEs) form the backbone of computational modeling and simulation efforts in science and engineering. The majority of numerical methods for PDEs rely on a representation of the domain as a mesh. However, solution quality and mesh quality are strongly related, such that poor quality meshes with irregularly shaped elements result in poorly approximated solutions \cite{babuvska1976angle, shewchuk2002good, katz2011mesh}. This is especially problematic in 3D, where it is difficult to automatically and efficiently generate unstructured meshes with guaranteed element quality \cite{boggs2005dart}.  \\

\noindent Meshfree methods encompass a broad class of numerical schemes intended to circumvent the mesh generation step. These methods range from particle-based methods to high order collocation type schemes \cite{fornberg2015solving}. However, a common issue faced by meshfree discretizations is balancing accuracy with stability and robustness. While there are a variety of methods for constructing accurate structure preserving mesh-based discretiztations \cite{fjordholm2012arbitrarily, lipnikov2014mimetic, cotter2023compatible}, it is more difficult to ensure that meshfree methods are conservative and stable \cite{tominec2021least, glaubitz2023towards}. \\ 

\noindent In this paper, we present a method for constructing meshfree discretizations based on the enforcement of a summation-by-parts (SBP) property. This work is closely related to the formulation of  \cite{kwan2012conservative}, but exploits the fact that enforcing only first order accuracy constraints results in a simpler construction of meshfree operators \cite{trask2020conservative}. These operators are then used to formulate a meshfree semi-discretization in terms of finite volume fluxes, as is done in \cite{kwan2012conservative}. If these fluxes are local Lax-Friedrichs fluxes with appropriate wave-speed estimates, the resulting discretization can be shown to be invariant-domain preserving under forward Euler time-stepping and a CFL condition. \\

\noindent The paper proceeds as follows. In Section~\ref{sec:sbp}, we introduce the concept of summation by parts (SBP) operators. In Section~\ref{sec: construct SBP Ops}, we introduce a methodology for constructing SBP and norm matrices given a point cloud, an adjacency matrix, and surface information (e.g., outward unit normals and boundary quadrature weights). In Section~\ref{sec: finding adjacency}, we explore different ways of constructing the adjacency matrix and its effect on the behavior of the differentiation matrix. In Section~\ref{sec: numerical method}, we discuss how to construct a numerical method for nonlinear conservation laws based on the meshfree operators, and in Section~\ref{sec: results}, we present some numerical results where we apply the method to the 2D linear advection and compressible Euler equations.
    %Will have to talk about benefits of using a meshless method, maybe some history behind it, cite Yi-min /Dr. Chan paper, I think related but need to read carefully to see the connection.% \\ 

\section{Summation by parts operators}
\label{sec:sbp}

    \noindent The goal of this paper is to create a first order accurate meshless numerical method to solve non-linear conservation laws. The main tool in building such a numerical method will be summation by parts differentiation operators. Such methods are useful for constructing numerical discretizations which are both conservative and satisfy an energetic or entropic statement of stability \cite{kwan2012conservative}. \\

    \noindent Borrowing from the notation in \cite{Main_Paper},  we first introduce summation by parts operators. Consider $u(x) \in L^2(\Omega)$ where $\Omega$ is our domain. We start with an arbitrary set of $N$ nodes $S = \{(x_i, y_i)\}^N_{i=1}$ where $N = N_{int} + N_{b}$ ($N_{int}$ refers to the number of interior points and $N_{b}$ refers to the number of points in $\partial \Omega$). 
    Hence $u$ on $S$ is denoted by:

    \newcommand{\fnt}[1]{\bm{\mathsf{ #1}}}
    
    \begin{equation}
        \fnt{u} = [u(x_1, y_1), ... , u(x_N, y_N)]^T
    \end{equation}
    and 
    \begin{equation}
        \fnt{u}_x \equiv \left[\frac{\partial u}{\partial x}(x_1, y_1), ..., \frac{\partial u}{\partial x}(x_N, y_N)\right]^T.
    \end{equation}

\noindent We begin by introducing some matrices which are used to approximate integrals. We first assume that we are given a matrix $\fnt{H}$ which is diagonal and positive definite ``norm'' matrix such that
\begin{equation}
\fnt{u}^T\fnt{H}\fnt{v} \approx \int_\Omega uv.
\label{eq:H_def}
\end{equation}
We will define the specific form of the matrix $\fnt{H}$ in Section \ref{sec: construct SBP Ops}. The choice of a diagonal (e.g., ``lumped'') norm matrix is made to simplify the construction of a robust numerical meshfree scheme \cite{guermond2019invariant}. 

\noindent Furthermore, we assume that we are given a matrix $\fnt{E}_x$ which satisfies:
\begin{equation}
    \fnt{u}^T \fnt{E}_x \fnt{v} \approx \int_{\partial \Omega} u v n_x. \label{prop3b}
\end{equation}

\noindent A matrix $\fnt{D}_x = \fnt{H}^{-1} \fnt{Q}_x$ is a first order accurate summation by parts (SBP) differentiation if it satisfies the following properties: \\
\begin{equation}  
        \fnt{Q}_x \fnt{1} = \fnt{0} \label{prop1}
\end{equation}
\begin{equation}
    \fnt{Q}_x = \fnt{S}_x + \frac{1}{2}\fnt{E}_x \; 
    \; \text{where $\fnt{S}_x$ is skew-symmetric and $\fnt{E}_x$ is symmetric} \label{prop3}
\end{equation}
\begin{equation}
    \fnt{1}^T \fnt{E}_x \fnt{1} = \int_{\partial \Omega}  n_x.  \label{prop2}
\end{equation}

\noindent Here, \eqref{prop1} is a consistency condition, which implies that $\fnt{D}_x$ exactly differentiates constants. The second property \eqref{prop3} is the summation-by-parts or SBP property.  \\

\noindent The SBP operator $\fnt{D}_x$ approximates the first derivative $\fnt{D}_x \fnt{u} \approx \fnt{u}_x$ while imitating integration by parts through the SBP property \eqref{prop3} and the integral approximations \eqref{eq:H_def} and \eqref{prop3b}. Consider two differentiable functions $u, v$. Integration by parts gives
\begin{equation}
    \int_{\Omega} v \frac{\partial u}{\partial x} + \int_{\Omega} u \frac{\partial v}{\partial x} = \int_{\partial \Omega} vun_x.
\end{equation}

\noindent \eqref{prop3} tells us that
\begin{equation}
    \fnt{Q}_x + \fnt{Q}_x^T = \fnt{E}_x \label{Q-E property}.
\end{equation} 
Combining \eqref{Q-E property} with $\fnt{D}_x = \fnt{H}^{-1}\fnt{Q}_x$ tells us that

\begin{equation}
    \fnt{v}^T\fnt{HD}_x\fnt{u} + \fnt{u}^T\fnt{HD}_x\fnt{v} = \fnt{v}^T\fnt{E}_x\fnt{u}.
\end{equation}

%\noindent Hence this allows us to make the following heuristic appromixations: 
\noindent Here, the matrix $\fnt{Q}_x$ encodes approximations of the following integral:
\begin{align}
    \fnt{v}^T\fnt{Q}_x\fnt{u} \approx \int_{\Omega} v \frac{\partial u}{\partial x}. \label{int1} %\\ 
%    \fnt{v}^T\fnt{H}\fnt{u} \approx \int_{\Omega} vu \label{int2} \\ 
%    \fnt{v}^T\fnt{E}_x\fnt{u} \approx \int_{\partial\Omega} vun_x  \label{int3}
\end{align}

%\noindent In \cite{kwan2012conservative} and \cite{Main_Paper}, the authors prove that \eqref{int1} - \eqref{int3} is exact for $u, v = 1$ (In reality, it is true for up to the degree of the monomial basis chosen in a higher order SBP method).
\noindent Similar relations hold for $\fnt{Q}_y, \fnt{E}_y$.

\section{Constructing meshfree SBP Operators} \label{sec: construct SBP Ops}

\noindent In order to construct meshfree SBP operators, we will first construct norm and boundary operators $\fnt{H}, \fnt{E}_x, \fnt{E}_y$, which will then be used to construct SBP differentiation operators $\fnt{Q}_x, \fnt{Q}_y$ which satisfy \eqref{prop1} - \eqref{prop3}.  Meshfree SBP operators have previously been constructed by solving a nonlinear optimization problem with accuracy-based constraints \cite{kwan2012conservative}. In this work, by assuming only a first order consistency constraint, we adapt the method of \cite{Inverting_L} to construct $\fnt{Q}_x, \fnt{Q}_y$. This approach relies only on the solution of a graph Laplacian matrix equation and simple algebraic operations. \\

\noindent We will outline our approach to constructing $\fnt{E}_x$ and $\fnt{Q}_x$ in the following sections. The procedure for $\fnt{E}_y$ and $\fnt{Q}_y$ will be identical. 

\subsection{Boundary operators} \label{subsec: constructing E}

\noindent Following \cite{Main_Paper}, $\fnt{E}_x$ is defined as a diagonal matrix:
\begin{equation}
   (E_x)_{ii}  = 
        \left\{
            \begin{array}{lr}
            w_i n_{x,i} & \text{if $i$ is a boundary node} \\
            0 & \text{otherwise} 
            \end{array}
        \right\},        
\end{equation}
\noindent where $w_i$ is a quadrature weight associated with the $i$th point on the domain boundary and $n_{x,i}$ denotes the value of the $x$ component of the outward normal at this point. For example, for the domains considered in this paper, the boundary is a collection of circles, so the outward unit normal can be computed analytically. For all numerical experiments, the boundary points are uniformly distributed with $w_i = \frac{1}{|\partial \Omega|}$, which corresponds to the periodic trapezoidal rule \cite{trefethen2014exponentially}. Note that \eqref{prop2} is satisfied under this construction. 

\subsection{Algebraic construction of the volume SBP operator} \label{subsec: constructing S}

\noindent After constructing $\fnt{E}_x$, the next step in constructing $\fnt{Q}_x$ is to determine a matrix $\fnt{S}_x$ such that $\fnt{Q}_x = \fnt{S}_x + \frac{1}{2}\fnt{E}_x$ satisfies \eqref{prop3}. Since the sparsity pattern of $\fnt{S}_x$ is not specified, we will determine a non-zero sparsity pattern for $\fnt{S}_x$ by building a connectivity graph between nodes $v_i \in S$. This will allow us to define an adjacency matrix $\fnt{A}$ on our set of points $S$. To determine the non-zero entries of $\fnt{S}_x$, we will utilize the approach taken to construct sparse low order multi-dimensional SBP operators in \cite{Chan_paper_1, wu2024entropy}, which adapts techniques from \cite{Inverting_L} involving the graph Laplacian of $\fnt{A}$. 
%While not strictly necessary, we are mathematically motivated to utilize the graph Laplacian matrix for this purpose, as will demonstrate later in this section. 
    \begin{definition}
        Provided a simple graph with the nodes $v_1, ... , v_N$, its corresponding adjacency matrix, $\fnt{A}$, is defined by the following:

        \begin{align}
           \fnt{A}_{ij}  = 
                \left\{
                    \begin{array}{lr}
                    1 & \text{if $i \neq j$ and $v_i$ is adjacent to $v_j$} \\
                    0 & \text{otherwise} 
                    \end{array}
                \right\}.
        \end{align}
        \label{def:adj}
    \end{definition}

    \noindent We now begin the second step, which is to construct an SBP operator that satisfies the consistency and SBP properties \eqref{prop1} and \eqref{prop3}. For the remainder of this section, we drop the $x$ subscript for simplicity of notation.\\
    
    \noindent From \eqref{prop3}, since $\fnt{E}$ is assumed to be known, we can construct $\fnt{Q} = \fnt{S} + \frac{1}{2}\fnt{E}$ if there exists a skew symmetric matrix $\fnt{S}$ such that \eqref{prop1} holds:
    \begin{gather}
         \fnt{Q1} = \fnt{0} \; \\
         \Rightarrow \fnt{S}\fnt{1} = -\frac{1}{2}\fnt{E1} = b\\
    \end{gather}
    where have introduced $\fnt{b} = -\frac{1}{2}\fnt{E1}$. Since $\fnt{S}$ is skew-symmetric by definiton, we follow \cite{Inverting_L} and make the ansatz that, for some $\fnt{\Psi} \in \mathbb{R}^N$
    \begin{equation}
        \fnt{S}_{ij} = \fnt{\Psi}_i - \fnt{\Psi}_j. \label{def of S}
    \end{equation}
    Hence:
    \begin{equation}
        (\fnt{S1})_i = \sum_j \fnt{S}_{ij} =  \sum_j \fnt{\Psi}_i - \fnt{\Psi}_j = \fnt{b}_i \label{graph laplacian eq 1}
    \end{equation}

    \noindent It was observed in \cite{Inverting_L} that \eqref{graph laplacian eq 1} is related to the graph Laplacian matrix $\fnt{L}$.
    \begin{definition}
         Given an adjacency matrix $A$, the corresponding Laplacian matrix $\fnt{L}$, is defined:
        \begin{align}
           \fnt{L}_{ij}  = 
                \left\{
                    \begin{array}{lr}
                    \text{deg($v_i) = \sum_{j = 1}^N \fnt{A}_{ij}$} & \text{if $i = j$} \\
                    -1 & \text{if $\fnt{A}_{ij} \neq 0$} \\
                    0 & \text{otherwise}
                    \end{array}
                \right\}.
        \end{align}
    \end{definition}
    \noindent Then, \eqref{graph laplacian eq 1}, is equivalent to the following property of the graph Laplacian, which holds for an arbitrary vector $\fnt{x}$:
    \begin{equation}
        (\fnt{Lx})_i  = \sum_{v_j \in \text{Nbrs}(v_i)} (x_i - x_j) \label{graph laplacian eq 2}
    \end{equation}

    \noindent Notice that \eqref{graph laplacian eq 1} is in the same form as \eqref{graph laplacian eq 2}, allowing us to establish the following relationship:
    \begin{gather}
        (\fnt{S1})_i = \fnt{b}_i = \sum_j \fnt{\Psi}_i - \fnt{\Psi}_j = (L\fnt{\Psi})_i  \label{graph laplacian eq 3} \\
        \fnt{S1} = \fnt{b} \Leftrightarrow \fnt{L\fnt{\Psi}} = \fnt{b}. \label{systems of equations}
    \end{gather}

    \noindent We will later discuss different methods for constructing the adjacency matrix $\fnt{A}$, but assuming we have defined a notion of connectivity such that the graph formed by the nodes is connected, we still need to ensure that for any $\fnt{b} = -\frac{1}{2}\fnt{E1}$ that $\fnt{L\fnt{\Psi}} = \fnt{b}$ does indeed have a solution since $\fnt{L}$ is singular. \\
    \begin{lemma}
    For $\fnt{b} = -\frac{1}{2}\fnt{E}_x\fnt{1}$, where $\fnt{E}_x$ satisfies \eqref{prop3b},  $\fnt{L\fnt{\Psi}} = \fnt{b}$ has a solution. 
    \end{lemma}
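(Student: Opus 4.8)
The plan is to treat $\fnt{L}\fnt{\Psi} = \fnt{b}$ as a linear system and invoke the Fredholm alternative, reducing existence of a solution to a single scalar compatibility condition that I then verify geometrically. First I would recall the standard spectral structure of the graph Laplacian: since $\fnt{A}$ is symmetric with nonnegative entries, $\fnt{L}$ is real, symmetric, and positive semidefinite, and because the underlying graph is assumed connected, the kernel of $\fnt{L}$ is exactly one-dimensional and spanned by the all-ones vector $\fnt{1}$ (this is the statement that $\fnt{L}\fnt{1} = \fnt{0}$ together with the zero eigenvalue having multiplicity one for a connected graph).

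Next, because $\fnt{L}$ is symmetric, the fundamental theorem of linear algebra gives $\operatorname{range}(\fnt{L}) = \ker(\fnt{L}^T)^\perp = \ker(\fnt{L})^\perp = \{\fnt{1}\}^\perp$. Hence $\fnt{L}\fnt{\Psi} = \fnt{b}$ is solvable if and only if $\fnt{b}$ is orthogonal to the constant vector, i.e. $\fnt{1}^T\fnt{b} = 0$. This converts the whole lemma into checking the single scalar identity $\fnt{1}^T\fnt{b} = 0$.

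The third step is to evaluate this inner product. Using $\fnt{b} = -\tfrac{1}{2}\fnt{E}_x\fnt{1}$ together with the exact boundary identity \eqref{prop2} guaranteed by the construction in Section~\ref{subsec: constructing E}, I obtain
\[
\fnt{1}^T\fnt{b} = -\tfrac{1}{2}\,\fnt{1}^T\fnt{E}_x\fnt{1} = -\tfrac{1}{2}\int_{\partial\Omega} n_x.
\]
Finally I would argue that $\int_{\partial\Omega} n_x = 0$ by the divergence theorem applied to the constant field $\mathbf{F} = (1,0)$: since $\nabla\cdot\mathbf{F} = 0$, the flux $\int_{\partial\Omega}\mathbf{F}\cdot\mathbf{n} = \int_{\partial\Omega} n_x$ vanishes. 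Equivalently, for the circular boundaries and equispaced nodes used here, the discrete sum $\sum_i w_i n_{x,i}$ of normal components vanishes directly. Either way $\fnt{1}^T\fnt{b} = 0$, which establishes the claim.

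The main obstacle — really the only nonroutine point — is ensuring the compatibility condition holds exactly rather than approximately. The defining relation \eqref{prop3b} only asserts $\fnt{1}^T\fnt{E}_x\fnt{1}\approx\int_{\partial\Omega} n_x$, so the crux is to appeal to the exact discrete identity \eqref{prop2} (which the diagonal construction of $\fnt{E}_x$ satisfies on the nose) and to the fact that the continuous boundary integral of a single normal component over a closed curve is exactly zero. Everything else — the semidefiniteness of $\fnt{L}$, the one-dimensional kernel for a connected graph, and the Fredholm orthogonality condition — is standard linear algebra that I would state rather than belabor.
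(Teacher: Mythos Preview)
Your argument is correct and follows essentially the same route as the paper: identify $\ker(\fnt{L})=\operatorname{span}\{\fnt{1}\}$ for a connected graph, reduce solvability to the orthogonality condition $\fnt{1}^T\fnt{b}=0$, and verify this via $\fnt{1}^T\fnt{E}_x\fnt{1}=\int_{\partial\Omega}n_x=0$. Your write-up is in fact slightly more careful than the paper's, since you explicitly invoke the exact identity \eqref{prop2} rather than the approximate \eqref{prop3b} and justify $\int_{\partial\Omega}n_x=0$ via the divergence theorem.
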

    \begin{proof} 
    \begin{equation}
        \fnt{b} \in R(L) \Leftrightarrow \fnt{b} \perp N(L^T) \Leftrightarrow \fnt{b} \perp N(L)
    \end{equation}
    \noindent However, because we assumed that L was a graph Laplacian to a connected graph. $\fnt{L}$ is a positive semi definite matrix with only one zero eigenvalue. Hence its null space has a dimension of 1. In addition, by definition of a graph Laplacian: $\fnt{L1} = \fnt{0}$. Therefore,
    \begin{equation}
        N(\fnt{L}) = \text{span}\{\textbf{1}\}.
    \end{equation}
    Hence $\fnt{L\fnt{\Psi}} = \fnt{b}$ has a solution if and only if $\fnt{b} \perp \text{span}\{\textbf{1}\}$. 
    By definition of $\fnt{E}_x$ \eqref{prop3b}:
    \begin{equation}
        \textbf{1}^Tb = 0 \Leftrightarrow \textbf{1}^T\fnt{E}_x\textbf{1} = \int_{\partial \Omega} n_x = 0,
        \end{equation}
    implying that $\fnt{b} \perp \text{span}\{\textbf{1}\}$.% and $\fnt{L\fnt{\Psi}} = \fnt{b}$ always has a solution.
    \end{proof}
    
    \noindent In practice, because $\fnt{L}$ has a null space with dimension of 1, there are infinite solutions for $\fnt{L\fnt{\Psi}} = \fnt{b}$. Therefore, an extra linearly independent constraint is added (ie: $\textbf{1}^T \fnt{\fnt{\Psi}} = 0$) to make the solution of the graph Laplacian problem unique. \\
    
    \noindent From $\fnt{\Psi}$, we can compute $\fnt{S}$ \eqref{def of S}. Since we assume we are given $\fnt{E}$, we now can construct $\fnt{Q}$ from $\fnt{S}, \fnt{E}$. The last step is to construct a suitable diagonal norm matrix $\fnt{H}$. \\    

    \subsection{Optimization of the norm matrix} \label{subsec: Creating H}

    \noindent In Section~\ref{subsec: constructing E} and Section~\ref{subsec: constructing S}, we detail a method for constructing SBP operators $\fnt{Q}$. In order to construct differentiation operators $\fnt{D} = \fnt{H}^{-1}\fnt{Q}$ which will be used to discretize a system of PDEs, what remains is to construct the norm matrix $\fnt{H}$. \\

    \noindent From the conditions imposed on $\fnt{Q}_x$ and $\fnt{Q}_y$, $\fnt{D}_x\textbf{1} = \fnt{0}$ and $\fnt{D}_y\textbf{1} = \fnt{0}$ by construction. These are first order consistency conditions. However, while $\fnt{D}_x$ is supposed to be an approximation to the first derivative, $\fnt{D}_x \fnt{x} \neq \fnt{1}$. Thus, we choose to optimize the accuracy of both $\fnt{D}_x, \fnt{D}_y$ by constructing a diagonal norm matrix $\fnt{H}$ to minimize the error in $\fnt{D}_x \fnt{x} \approx \fnt{1}$ and $\fnt{D}_y \fnt{y} \approx \fnt{1}$. \\
    
     \noindent We construct the norm matrix as follows: let $\fnt{w}$ be the diagonal of $\fnt{H}$. Then, instead of directly minimizing the difference between $\fnt{D}_x \fnt{x} - \fnt{1}$, we can multiply through by $\fnt{H}$. Noting that $\fnt{H}\fnt{1} = \fnt{w}$, we can then minimize the difference between $\fnt{Q}_x\fnt{x}-\fnt{w}$ and $\fnt{Q}_y\fnt{y}-\fnt{w}$ instead. This translates to solving the following non-negative least squares problem: 
    \begin{equation}
        \text{min}_{\fnt{w} > \fnt{0}} \frac{1}{2}\|\fnt{Q}_x\fnt{x} - \fnt{w}\|^2 +  \frac{1}{2}\|\fnt{Q}_y\fnt{y} - \fnt{w}\|^2 \label{quadratic op}
    \end{equation}
    In practice, instead of enforcing a strict inequality, we enforce $\fnt{w} > 1 / N^2$ where $N$ is the total number of points. Furthermore, because solving \eqref{quadratic op} for large numbers of points can be computationally challenging, we utilize a splitting conic solver \cite{scs}, which splits the solution of \eqref{quadratic op} into an iterative process involving the solution of linear systems and a projection onto the space of positive weights (e.g., a cutoff).
    \\
    
    \subsection{Accuracy of the optimized norm matrix} \label{subsec: testing H}

      \noindent In this section, we numerically compare the accuracy of $\fnt{D}_x$ under the optimized norm matrix $\fnt{H}_{\rm opt}$ (determined by solving \eqref{quadratic op}) and under a simple uniformly weighted norm matrix, $\fnt{H}_{\rm unif}$, where $(\fnt{H}_{\rm unif})_{ii}  = \frac{\text{Vol}(\Omega)}{|S|}$. Note that $(\fnt{H}_{\rm unif})_{ii} = \frac{\text{Vol}(\Omega)}{|S|}$ satisfies $\fnt{1}^T\fnt{H}_{\rm unif}\fnt{1}  = \text{Vol}(\Omega)$, implying that \eqref{int1} is exact for $u,v = 1$. \\
      
      \noindent To compare $\fnt{H}_{\rm opt}$ and $\fnt{H}_{\rm unif}$, we test the accuracy of the differentiation matrices under each norm matrix by approximating the derivative of two functions $\frac{\partial u_1}{\partial x} $, $\frac{\partial u_2}{\partial x}$ on $S$, where
    \begin{enumerate}
        \item $u_1(x,y) = x$
        \item $u_2(x,y) =  4\sin(y)e^{-(x^2 + y^2)}(\cos(x) - 2x\sin(x))$
    \end{enumerate}
    We define $\Omega_1 := \{(x, y) \in \mathbb{R}^2 \;|\; x^2 + y^2 \le R^2 \}$ and $S$ as $S = S_i \cup S_{b}$ where
    \begin{gather*}
        S_i := \{(x, y) \in \Omega_1 \; |\; (x,y) = R\left(\frac{2}{n_x}a - 1, \frac{2}{n_y}b - 1 \right), \quad a \in \mathbb{W}, \quad b \in \mathbb{W}\} \\
         S_b := \{(x, y) \in \partial \Omega_1 \;|\; (x,y) = \left(R\cos\left(\frac{2 \pi}{n_b}t\right), R\sin\left(\frac{2 \pi}{n_b}t\right)\right), \quad t \in \mathbb{W}, \quad t \le n_b\} 
    \end{gather*} 
    where $\mathbb{W} = \{0, 1, 2, 3, \ldots\}$. In other words, the set $S$ is the union of the interior and boundary points. Here, $n_x$ and $n_y$ respectively refer to the number of evenly spaced points along the $x$ and $y$ coordinates used to define a ``background'' grid. The circle defined by $\Omega_1$, and $n_b$ refers to the number of evenly spaced points on the boundary of the circle starting at $(R, 0)$ and going counter-clockwise. Figure \ref{Simple mesh} illustrates this construction of $S$ for $R = 3$, which we will also use for following numerical experiments.

    \begin{figure}[h]
        \centering
        \includegraphics[scale = 0.1]{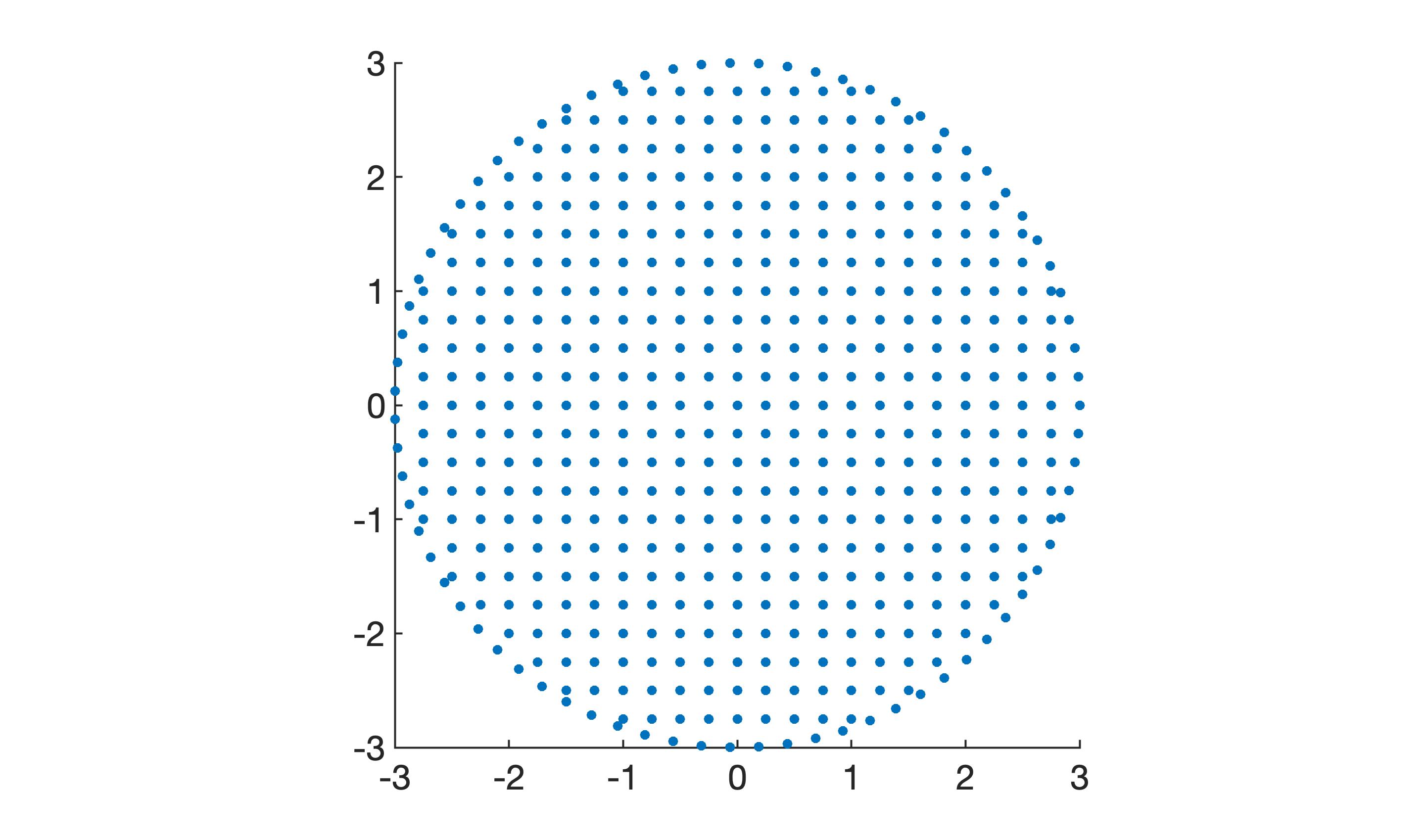}
        \caption{The domain $\Omega_1$ for $n_x = n_y = 25, n_b = 75, R = 3$}
        \label{Simple mesh}
    \end{figure}

    \noindent We approximate the $x$ derivative at nodal points via $\fnt{H}^{-1}\fnt{Q}_x \fnt{u} \approx \frac{\partial u}{\partial x}$, where $\fnt{H}$ can be either $\fnt{H}_{\rm opt}$ or $\fnt{H}_{\rm unif}$. We follow an analogous procedure for the $y$ derivative. The $L_2$ error of the partial derivative of $u(x, y)$ with respect to $x$ can be approximated by:

    \begin{equation}
        %\left\|\frac{\partial u(x, y)}{\partial x} - \frac{\partial \hat{u}(x, y)}{\partial x}\right\|_{L^2(\Omega)} \approx 
        \sqrt{(\fnt{D}_x \fnt{u} - \fnt{u}_x)^T\fnt{H}(\fnt{D}_x \fnt{u} - \fnt{u}_x)},    
    \end{equation} 
    where $\fnt{u}_x$ is the vector containing point values of the exact derivative of $u$.\\ 
   
    \noindent Tables \ref{L2 error optimized H} and \ref{L2 error uniform H} show the $L_2$ errors when computing $\frac{\partial u_1(x, y)}{\partial x}$ and $\frac{\partial u_2}{\partial x}$ using the SBP operators created under the two different norm matrices $\fnt{H}_{\rm opt}, \fnt{H}_{\rm unif}$ on $\Omega_1$. %Table \ref{L2 error optimized H} uses $\fnt{H}_{\rm opt}$ while Table \ref{L2 error uniform H} uses $\fnt{H}_{\rm unif}$. 
    The adjacency matrix $\fnt{A}$ (see Definition~\ref{def:adj}) is computed as follows: a point $(x_j, y_j)$ is a neighbor of a point $(x_i, y_i)$ if the distance between them is smaller than some distance. In these experiments, we use an arbitrary distance threshold of
    \[
    r = 2.5\frac{{\rm Diam}(\Omega_1)}{\max(n_x, n_y)},
    \]
    where ${\rm Diam}(\Omega_1)$ is the diameter of the circular domain $\Omega_1$. We compare this method (which we refer to as the ``Euclidean Radius'' methods) with other methods of computing the adjacency matrix in Section \ref{sec: finding adjacency}.\\
    
    \noindent Table~\ref{L2 error optimized H} and \ref{L2 error uniform H} also show the computed convergence rates: $\text{log}_2\frac{error_{i - 1}}{error_i}$. The operators are tested for the following grid sizes:     
    \begin{itemize}
        \setlength{\itemsep}{-0.05em}
        \item Grid 1: $n_x = n_y = 75,  n_b = 250$
        \item Grid 2: $n_x = n_y = 150,  n_b = 500$
        \item Grid 3: $n_x = n_y = 300,  n_b = 1000$
        \item Grid 4: $n_x = n_y = 600,  n_b = 2000$
        \item Grid 5: $n_x = n_y = 1200,  n_b = 4000$
    \end{itemize}

    \captionsetup{font=small}  % Reduce caption font size
    % \newgeometry{top=1in, bottom=0.8in}    

    \newcolumntype{P}[1]{>{\centering\arraybackslash}p{#1}}

\begin{table}[h]
    \centering
    \renewcommand{\arraystretch}{1.5} % Adjust the number to increase or decrease row spacing
    \begin{tabular}{|P{1.3cm}|c|c|c|c|c|}
    \hline
    \multicolumn{1}{|c|}{Grid} & \multicolumn{1}{c|}{$\frac{\partial u_1 }{\partial x}$} & \multicolumn{1}{c|}{Convergence Rate} & \multicolumn{1}{c|}{$\frac{\partial u_2}{\partial x}$} & \multicolumn{1}{c|}{Convergence Rate}  \\
    \hline
    $1$ & $0.2605$ &  & $0.06413$ &  \\
    \hline
    $2$ & $0.2013$ & $0.3719$ & $0.01626$ & $1.980$  \\
    \hline
    $3$ & $0.1439$ & $0.4842$ & $0.004061$ & $2.001$ \\
    \hline
    $4$ & $0.1072$ &$0.4248$ & $0.001205$ & $1.753$ \\
     \hline
    $5$ & $0.07634$ & $0.4896$ & $0.0002794$ & $2.109$  \\
     \hline
    \end{tabular}
    \caption{$L_2$ error for $\frac{\partial u_1 }{\partial x} | \frac{\partial u_2}{\partial x} $ using $\fnt{H}_{\rm opt}$ on $\Omega_1$.}
    \label{L2 error optimized H}
\end{table}
    
\begin{table}[h]
    \centering
    \renewcommand{\arraystretch}{1.5} % Adjust the number to increase or decrease row spacing
    \begin{tabular}{|P{1.3cm}|c|c|c|c|c|}
    \hline
    \multicolumn{1}{|c|}{Grid} & \multicolumn{1}{c|}{$\frac{\partial u_1 }{\partial x}$} & \multicolumn{1}{c|}{Convergence Rate} & \multicolumn{1}{c|}{$\frac{\partial u_2}{\partial x}$} & \multicolumn{1}{c|}{Convergence Rate}  \\
    \hline
    $1$ & $0.6704$ &  & $0.02969$ &  \\
    \hline
    $2$ & $0.4763$ & $0.4932$ & $0.02008$ & $0.5642$  \\
    \hline
    $3$ & $0.3328$ & $0.5712$ & $0.01286$ & $0.6429$ \\
    \hline
    $4$ & $0.2389$ &$0.4782$ & $0.007390$ & $0.7992$ \\
     \hline
    $5$ & $0.1690$ & $0.4994$ & $0.003894$ & $0.9243$  \\
     \hline
    \end{tabular}
     \caption{$L_2$ error for $\frac{\partial u_1 }{\partial x} | \frac{\partial u_2}{\partial x} $ using $\fnt{H}_{\rm unif}$ on $\Omega_1$.}
    \label{L2 error uniform H}
\end{table}

    \noindent From Tables \ref{L2 error optimized H} and \ref{L2 error uniform H}, we observe that the errors are consistently larger when using $\fnt{H}_{\rm unif}$. This suggests that using an optimized norm matrix $\fnt{H}_{\rm opt}$ as discussed in Section~\ref{subsec: Creating H} reduces the approximation error. 
    
\subsection{On the observed convergence rates}
\label{sec:convergence_rates}
\noindent We notice that in Tables \ref{L2 error optimized H} and \ref{L2 error uniform H} that the convergence rates are significantly lower for $\frac{\partial u_2}{\partial x}$ when compared to $\frac{\partial u_1 }{\partial x}$. In particular, when using $\fnt{H}_{\rm opt}$, the convergence rate for $\frac{\partial u_1 }{\partial x}$ is around $\frac{1}{2}$ while it is around 2 for  $\frac{\partial u_2 }{\partial x}$. This section will present numerical experiments which suggest that this difference in observed convergence rates is due to accuracy of the interior vs boundary stencils. \\

\noindent As we can see from Figure \ref{fig:abs val errors simple}, the approximation error $\left|\fnt{D}_x\fnt{u} - \fnt{u}_x\right|$ is larger near the boundaries. However, since $\frac{\partial u_2}{\partial x}$ is close to $0$ at the boundary $\partial \Omega_1$, the errors near the boundaries are smaller for $u_2$. This is not the case for $u_1$. \\

\begin{figure}[H]
    \centering
    % \subfigure[$|\fnt{D}_x\fnt{u_1} - \fnt{u}_{\fnt{1}x}|$] 
    % {\label{fig:u_1_diff}\includegraphics[width=60mm]{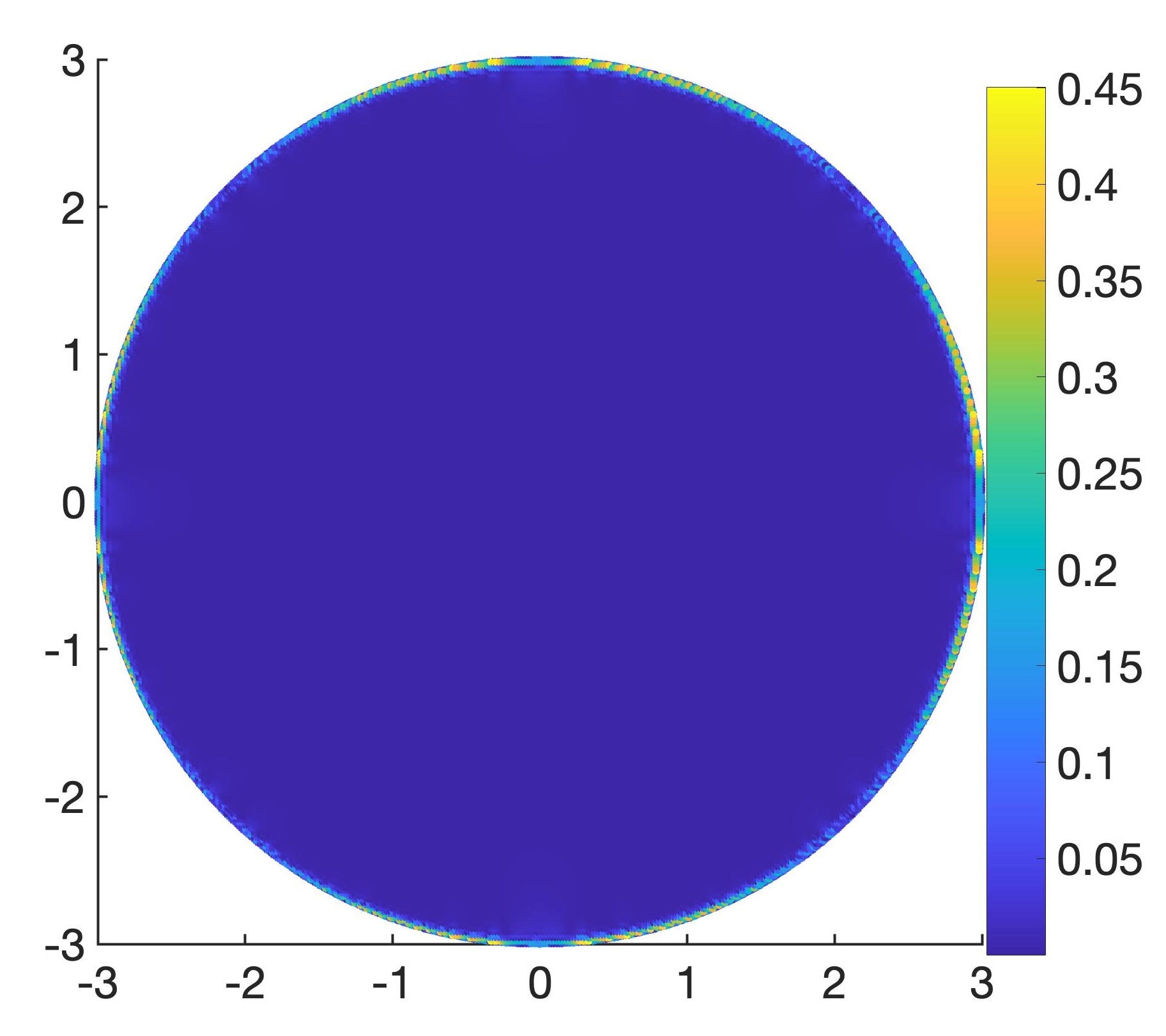}} 
    
    % \subfigure[$|\fnt{D}_x\fnt{u_2} - \fnt{u}_{\fnt{2}x}|$] 
    % {\label{fig:u_2_diff}\includegraphics[width=60mm]{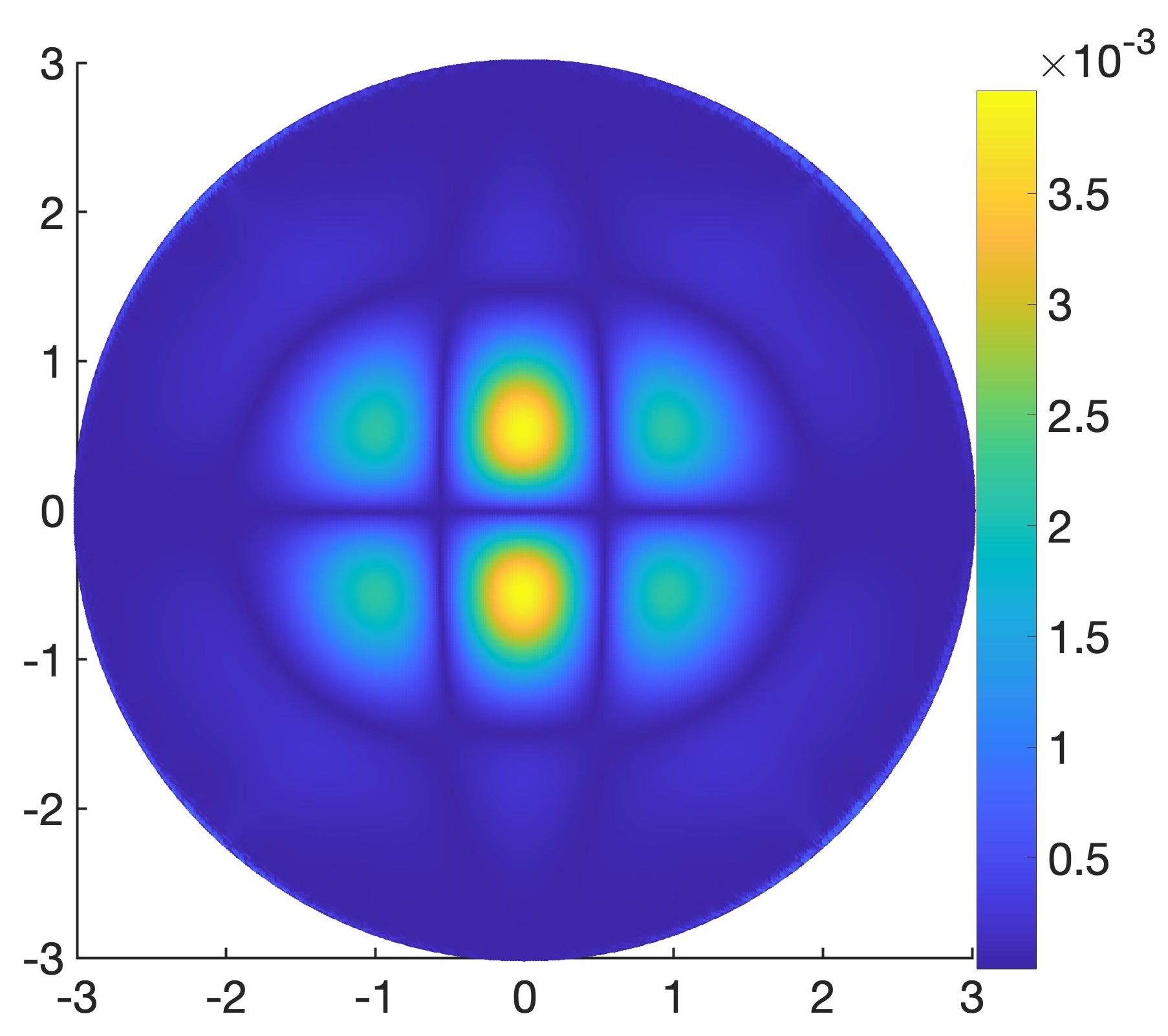}}
    
    \begin{subfigure}[b]{0.49\textwidth}
        \centering
        \includegraphics[width=\textwidth]{u1_error.jpg}
        \caption{$|\fnt{D}_x\fnt{u_1} - \fnt{u}_{\fnt{1}x}|$}
        \label{fig: u_1_diff}
    \end{subfigure}
    \begin{subfigure}[b]{0.49\textwidth}
        \centering
        \includegraphics[width=\textwidth]{u_2_error.jpg}
        \caption{$|\fnt{D}_x\fnt{u_2} - \fnt{u}_{\fnt{2}x}|$}
        \label{fig:u_2_diff}
    \end{subfigure}
    % \subfigure[]{\includegraphics[width=0.4\textwidth]{u1_error.jpg}} 
    % \subfigure[]{\includegraphics[width=0.4\textwidth]{u3_error.jpg}} 
    \caption{Absolute values of the computed errors for the differential operators on $u_1$ and $u_2$ on Grid 3 using the ``Euclidean Radius'' adjacency method.}
    \label{fig:abs val errors simple}
\end{figure}

\noindent To further support the analysis from above, we perform the same numerical experiment of approximating $\frac{\partial u_1}{\partial x} $ and $\frac{\partial u_2}{\partial x}$, but this time on $\Omega_2$, which is a circle with three smaller circles cut out of the interior:
\begin{gather*}
    C_i := \{(x, y) \in \mathbb{R}^2 \;|\; (x-h_i)^2 + (y-k_i)^2 \le s_i \} \\
    \Omega_2 := \{(x, y) \in \mathbb{R}^2 \;|\; x^2 + y^2 \le R^2, \;\; (x, y) \notin C_i, \; \; \forall i \in \{1, 2, .. , m\}\}.
\end{gather*}

\noindent We set $R = 3$, $s_i = \frac{2}{3}$, $h_i = \frac{3}{2}\cos\left(\frac{2\pi}{3}i\right)$, and $k_i = \frac{3}{2}\sin\left(\frac{2\pi}{3}i\right)$, for $i \in \{1, 2, 3\}$. By picking such a $\Omega_2$, $\frac{\partial u_2}{\partial x}$ is not approximately zero near the inner boundaries (ie: $\partial C_i$). Figure \ref{Sharingan mesh} shows $\Omega_2$ as reference. \\
\begin{figure}[h]
            \centering
            \includegraphics[scale = 0.1]{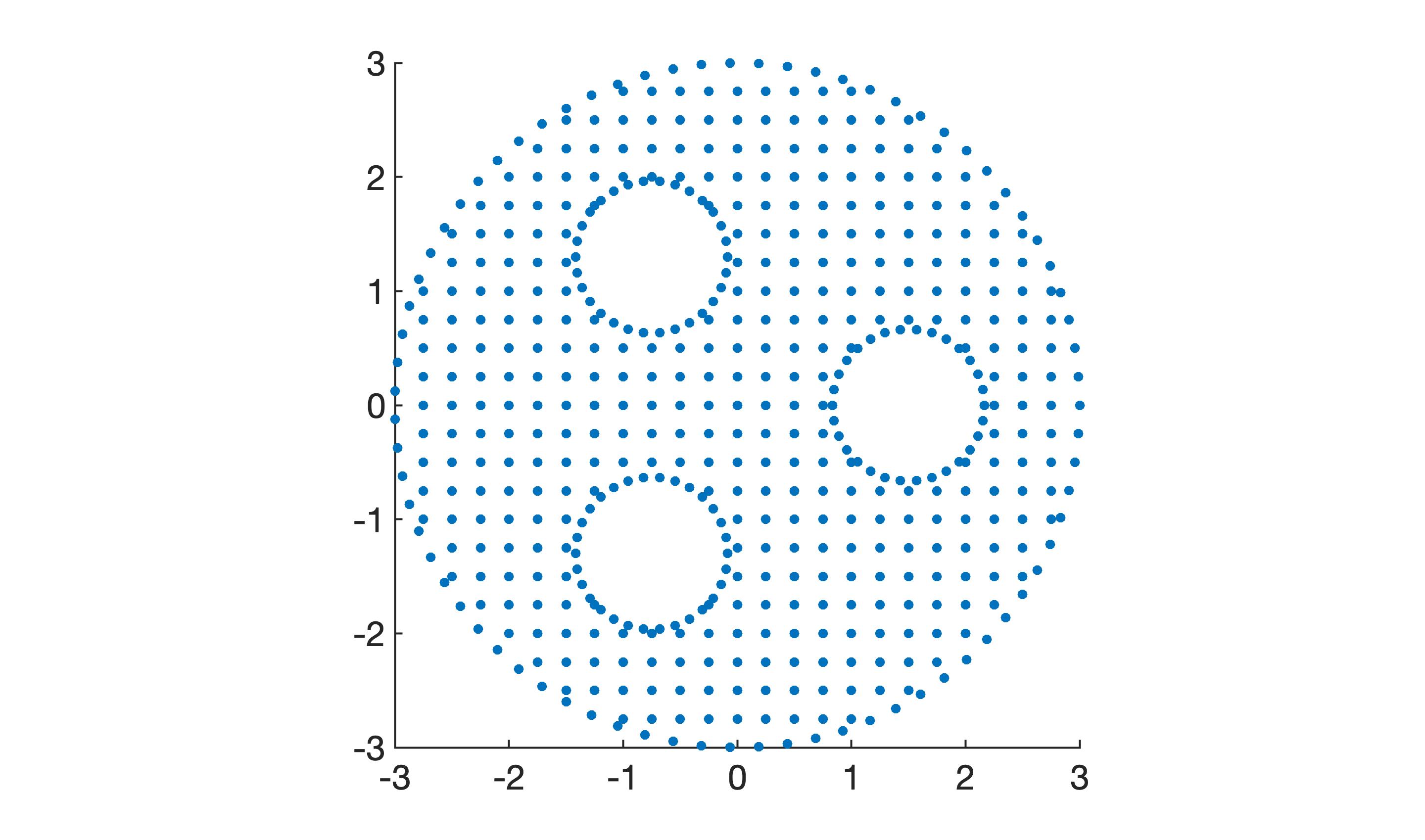}
            \captionsetup{belowskip=0pt}
            \caption{The domain $\Omega_2$ for $n_x = n_y = 25, n_b = 75, n_i  = 30, R = 3$. Here, $n_i$ denotes the number of nodes on the boundary of each inner circle.}
            \label{Sharingan mesh}
\end{figure}

\noindent  Table \ref{L2 sharingan error} displays the $L_2$ errors of $\frac{\partial u}{\partial x}$ on $\Omega_2$. %For clarity, this was not just eliminating the appropriate points from Table \ref{L2 error simple_1} and \ref{L2 error simple_2} and recalculating the error, but rather reconstructing the summation by parts differentiation matrices on $\Omega_2$ and then observing how the new differentiation matrices perform. 
Because $\frac{\partial u_2}{\partial x}$ is not near 0 near the inner boundaries of $\Omega_2$, we expect the convergence rates for both $\frac{\partial u_2}{\partial x}$ and $\frac{\partial u_1(x, y)}{\partial x}$ to both be around $\frac{1}{2}$. The grid sizes for $\Omega_2$ are as follows:
\begin{itemize}
        \setlength{\itemsep}{-0.05em}
        \item Grid 1: $n_x = n_y = 75,  n_b = 250, n_i = 60$
        \item Grid 2: $n_x = n_y = 150,  n_b = 500, n_i = 120$
        \item Grid 3: $n_x = n_y = 300,  n_b = 1000, n_i = 240$
        \item Grid 4: $n_x = n_y = 600,  n_b = 2000, n_i = 480$
        \item Grid 5: $n_x = n_y = 1200,  n_b = 4000, n_i = 960$
\end{itemize}
where $n_i$ denotes the number of equispaced quadrature points placed on the boundary of $C_i$.\\

\noindent Table \ref{L2 sharingan error} show that the convergence rates are now similar for both $\frac{\partial u_1(x, y)}{\partial x}$ and $\frac{\partial u_2}{\partial x}$ at a rate of $\frac{1}{2}$. This is consistent with our expectation that the convergence rates should be similar for functions in which their values and their derivatives are not near 0 at the boundaries. From Figure \ref{abs val errors sharingan}, we see that the error is again greatest near the boundaries. \\

\noindent We further verify the observation made above about the worse behavior near the boundaries. We check the $L_\infty$ error of $\frac{\partial u_1 }{\partial x}$ on $\Omega_1$ but this time differentiate between the nodes near the boundary: $\{(x, y) \in S | x^2 + y^2 \ge 4 \}$ and nodes far from the boundary $\{(x, y) \in S | x^2 + y^2 < 4 \}$. We use the $L_\infty$ error instead of the $L_2$ error to avoid needing to define appropriate quadratures over subsets of points. Table \ref{L2 error simple interior exterior} shows the results and convergence rates.

\begin{table}[h]
    \centering
    \renewcommand{\arraystretch}{1.5} % Adjust the number to increase or decrease row spacing
    \begin{tabular}{|P{1.3cm}|c|c|c|c|c|}
    \hline
    \multicolumn{1}{|c|}{Grid} & \multicolumn{1}{c|}{$\frac{\partial u_1 }{\partial x}$} & \multicolumn{1}{c|}{Convergence Rate} & \multicolumn{1}{c|}{$\frac{\partial u_2}{\partial x}$} & \multicolumn{1}{c|}{Convergence Rate}  \\
    \hline
    $1$ & $0.3547$ &  & $0.1622$ & \\
    \hline
    $2$ & $0.2695$ & $0.3963$ & $0.1000$ & $0.6978$ \\
    \hline
    $3$ & $0.1973$ & $0.4499$ & $0.07470$ & $0.4208$ \\
    \hline
    $4$ & $0.1440$ & $0.4543$ & $0.05211$ & $0.5195$ \\
     \hline
    $5$ & $0.1018$ & $0.5003$ & $0.03841$ & $0.4400$    \\
     \hline
    \end{tabular}
    \caption{$L_2$ errors and computed convergence rates for $\frac{\partial u_1 }{\partial x}$ and $\frac{\partial u_2}{\partial x}$  on $\Omega_2$.}       
    \label{L2 sharingan error}
\end{table}

\begin{table}[h]
    \centering
    \renewcommand{\arraystretch}{1.5} % Adjust the number to increase or decrease row spacing
    \begin{tabular}{|P{1.3cm}|c|c|c|c|c|}
    \hline
    \multicolumn{1}{|c|}{Grid} & \multicolumn{1}{c|}{$x^2 + y^2 > 4$} & \multicolumn{1}{c|}{Convergence Rate} & \multicolumn{1}{c|}{$x^2 + y^2 \le 4$} & \multicolumn{1}{c|}{Convergence Rate}  \\
    \hline
    $1$ & $0.4213$ &  & $0.005472$ &  \\
    \hline
    $2$ & $0.4819$ & $-0.1938$ & $0.002672$ & $1.0342$  \\
    \hline
    $3$ & $0.4505$ & $0.09721$ & $7.631 \times 10^{-4}$ & $1.8081$ \\
    \hline
     $4$ & $0.4767$ &$-0.08155$ & $3.250 \times 10^{-4}$ & $1.2312$ \\
     \hline
     $5$ & $0.5122$ & $-0.1036$ & $9.399 \times 10^{-5}$ & $1.7901$  \\
     \hline
    \end{tabular}
     \caption{$L_\infty$ error for $\frac{\partial u_1 }{\partial x} $ at interior vs exterior nodes.}    
    \label{L2 error simple interior exterior}
\end{table}

\noindent From Table \ref{L2 error simple interior exterior}, we do indeed see that the error is much more significant (a few order of magnitudes) larger near the boundaries. Furthermore, combined with our observations from Table \ref{L2 error optimized H} and \ref{L2 sharingan error}, it also suggests (though would need to be proven) that for sufficiently close nodes to the boundaries, the convergence rate is 0.5 and the convergence rate for nodes sufficiently far from the boundary nodes achieve between first and second order accuracy. This suggests that the meshfree interior stencils are second order accurate, while the boundary stencils are inconsistent in the sense that the pointwise error does not converge to zero as the mesh size decreases. However, experiments in Section~\ref{sec: results} suggest that numerical approximations of solutions of PDEs achieve first order accuracy in the $L^2$ norm. It is possible that the meshfree operators constructed here are first order accurate in some sense, but not in a pointwise sense. Analyzing this will be the focus of future work. 

\begin{figure}
    \centering
    
    % \subfigure[$|\fnt{D}_x\fnt{u_1} - \fnt{u}_{\fnt{1}x}|$] 
    % {\label{fig: u_1_sharingan_diff}\includegraphics[width=60mm]{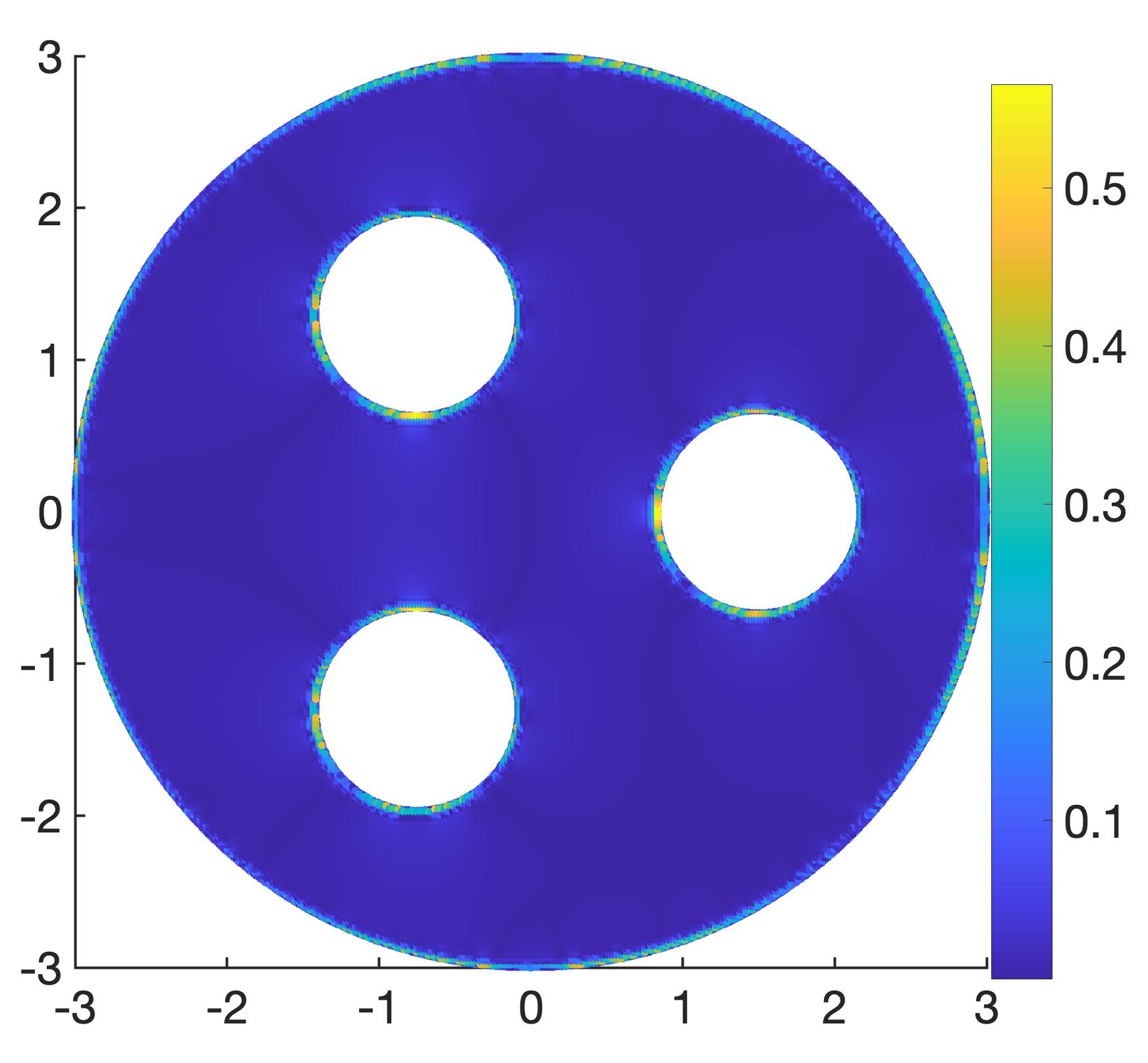}} 
    
    % \subfigure[$|\fnt{D}_x\fnt{u_2} - \fnt{u}_{\fnt{2}x}|$] 
    % {\label{fig:u_2_sharingan_diff}\includegraphics[width=60mm]{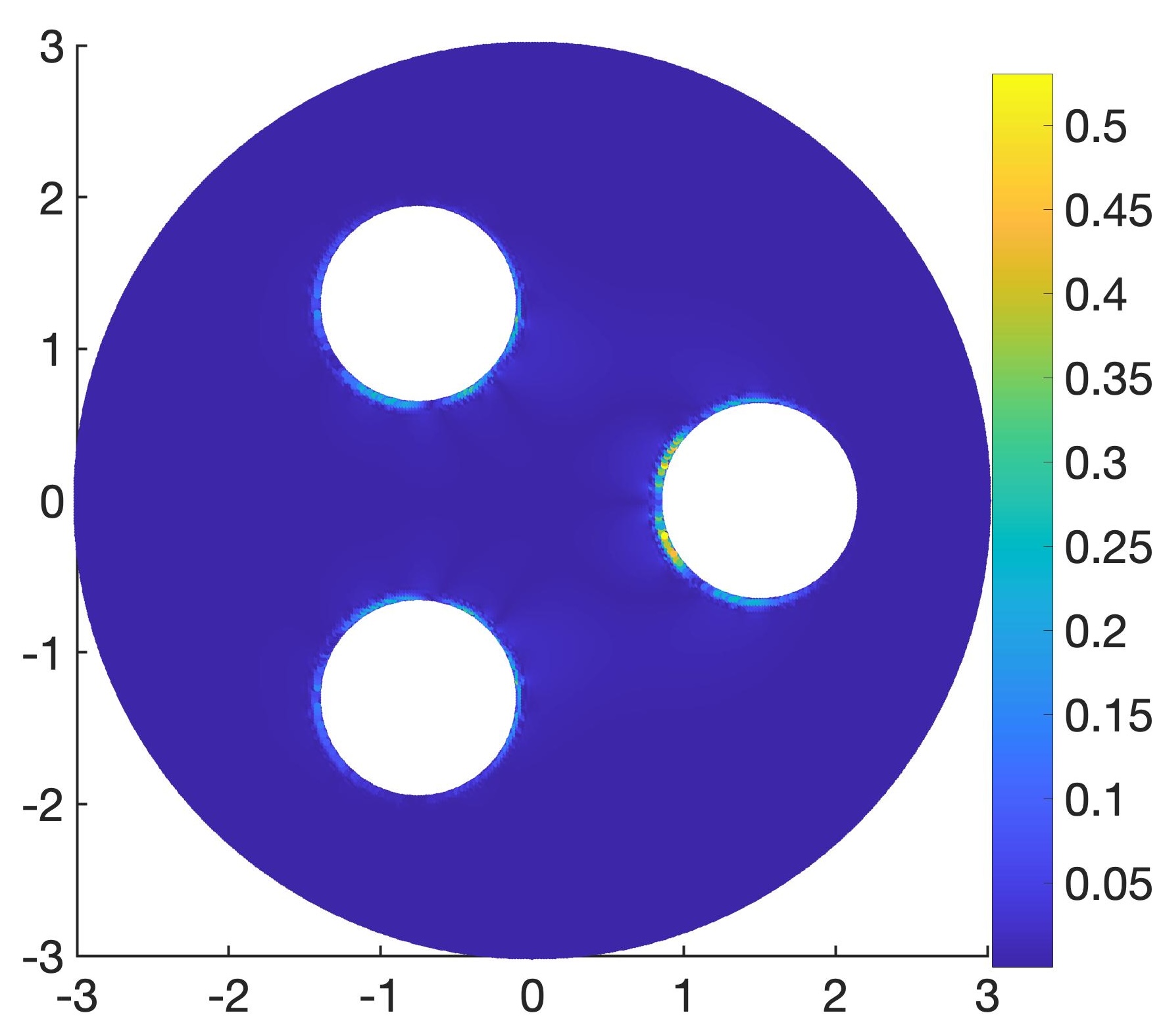}}
    
    \begin{subfigure}[b]{0.49\textwidth}
        \centering
        \includegraphics[width = 7.5cm, height = 7 cm]{u1_sharingan_error.jpg}
        \caption{$|\fnt{D}_x\fnt{u_1} - \fnt{u}_{\fnt{1}x}|$}
        \label{fig: u_1_sharingan_diff}
    \end{subfigure}
    \hspace{0.001\textwidth}
    \begin{subfigure}[b]{0.49\textwidth}
        \centering
        \includegraphics[width= 7.5cm, height = 7cm]{u_2_sharingan_error.jpg}
        \caption{$|\fnt{D}_x\fnt{u_2} - \fnt{u}_{\fnt{2}x}|$}
        \label{fig:u_2_sharingan_diff}
    \end{subfigure}

    \caption{Absolute value errors for differential operators on $u_1$ and $u_2$ on $\Omega_2$ (Euclidean Radius, Grid 3).}
     \label{abs val errors sharingan}
\end{figure}

\section{Finding Suitable Notion of Adjacency} \label{sec: finding adjacency}
    \noindent An important detail of the construction that was skipped over in previous sections is the construction of the adjacency/connectivity matrix $\fnt{A}$. In this section, we will explore several notions of connectivity through numerical experiments and determine which produces meshfree operators which result in lower errors. We will also discuss a few implementation details. While the only requirement on the adjacency matrix $\fnt{A}$ is that it represents a fully connected graph between the nodes, it is important to make sure that the notion of connectivity defined maintains sparsity but does not decrease accuracy. \\
    
\noindent  Let $v_i$ be the node corresponding to $(x_i, y_i) \in S$ and let $d(v_i, v_j)$ correspond to the Euclidean distance between $v_i$ and $v_j$. We explore the following four notions of connectivity: 

    \begin{enumerate}[topsep=0.5pt]
        \item \textbf{Euclidean Radius}: we say that $v_i$ and $v_j$ are adjacent if $d(v_i, v_j) \le r$.  Section \ref{Notes on Implementation} gives further details on the efficient implementation. 
        \item \textbf{Minimum spanning tree}: Using the nodes $v_1, ..., v_N$ and $d(v_i, v_j)$, we can form the adjacency matrix from a minimum spanning tree (the graph with the minimum number of edges such that there is a path between every pair of nodes). 
        \item \textbf{Degree 1 Delaunay}: We call $v_i$ and $v_j$ adjacent if they are neighbors in a Delaunay triangulation on $S$.  
        \item \textbf{Degree 2 Delaunay}: We call $v_i$ and $v_j$ adjacent if they are neighbors in a Delaunay triangulation on $S$ or if  $v_i$ and $v_j$ share a common neighbor. 
    \end{enumerate}

    \begin{remark}
    \noindent We note that the Delaunay triangulation connectivity methods are not ``meshfree''. However, unlike mesh-based finite element methods, the quality and regularity of the mesh generated by Delaunay triangulation does not significantly impact solution quality \cite{FEMgeometry}. Moreover, we consider Delaunay triangulations only as a comparison against truly meshfree notions of adjacency, such as the Euclidean Radius.
    \end{remark}
    
\begin{table}[H]
    \centering
    \renewcommand{\arraystretch}{1.5} % Adjust the number to increase or decrease row spacing
    \begin{tabular}{|P{1.6cm}|p{1.4cm}|p{1.1cm}|p{1.2cm}|p{1.0cm}|p{1.2cm}|p{1.0cm}|}
    \hline
    \multicolumn{1}{|c|}{Grid} & \multicolumn{2}{c|}{Euclidean Radius} & \multicolumn{2}{c|}{Delaunay Deg 1} & \multicolumn{2}{c|}{Delaunay Deg 2}  \\
    \hline
     & Error & Rate  & Error & Rate & Error & Rate \\
    \hline
    $1$ & $0.2605$ &  & $0.2909$ & & $0.2103$ & \\
    \hline
    $2$ & $0.2013$ & $0.3719$ & $0.2246$ & $0.3732$ & $0.1656$ & $0.3447$  \\
    \hline
    $3$ & $0.1439$ &  $0.4248$ & $0.1573$ & $0.5318$  & $0.1134$ & $0.5463$  \\
    \hline
     $4$ & $0.1072$ & $0.4248$& $0.1163$ & $0.4357$ & $0.0853 $ & $0.4108$   \\
     \hline
     $5$ & $0.07635$ & $0.4896$ & $0.08369$ & $0.4747$  & $0.0607$ & $0.4908$   \\
     \hline
    \end{tabular}
    \caption{$L_2$ error for $\frac{\partial u_1 }{\partial x}$ and computed convergence rates on $\Omega_1$.}   
    \label{L2 error simple_1}
\end{table}

\begin{table}[H]
    \centering
    \renewcommand{\arraystretch}{1.5} % Adjust the number to increase or decrease row spacing
    \begin{tabular}{|P{1.6cm}|p{1.6cm}|p{1.0cm}|p{1.4cm}|p{1.0cm}|p{1.4cm}|p{1.0cm}|}
    \hline
    \multicolumn{1}{|c|}{Grid} & \multicolumn{2}{c|}{Euclidean Radius} & \multicolumn{2}{c|}{Delaunay Deg 1} & \multicolumn{2}{c|}{Delaunay Deg 2}  \\
    \hline
     & Error & Rate & Error & Rate & Error & Rate \\
    \hline
     $1$ & $0.06413$ &  & $0.03321$ &  & $0.0700$ &  \\
    \hline
    $2$ & $0.01626$ & $1.980$ & $0.01591$ & $1.062$ & $0.0197$ & $1.829$  \\
    \hline
    $3$ & $0.004061$ & $2.001$ & $0.008208$ & $0.9548$ & $0.00753$ &  $1.387$  \\
    \hline
    $4$ & $0.001205$ & $1.753$ &$0.004118$ & $0.9951$ & $0.003561$ &  $1.080$ \\
    \hline
    $5$& $0.0002794$ & $2.109$  & $0.002072$ & $0.9909$  & $0.001787$ &  $0.9947$ \\
    \hline
    \end{tabular}
    \caption{$L_2$ error for $\frac{\partial u_2 }{\partial x}$ and computed convergence rates on $\Omega_1$.}      
    \label{L2 error simple_2}
\end{table} 
\noindent We test the accuracy of the differentiation matrices created by the four different notions of adjacency by approximating $\frac{\partial u_1}{\partial x} $, $\frac{\partial u_2}{\partial x}$. Tables \ref{L2 error simple_1} and \ref{L2 error simple_2} display the $\frac{\partial u_1}{\partial x}$, $\frac{\partial u_2}{\partial x}$ $L_2$ errors when approximated by the SBP operators with the various adjacency methods on $\Omega_1$ ($R = 3$). \\

\noindent All Euclidean Radius methods used $r = 2.5 \frac{6}{n_x}$, which we chose heuristically to balance accuracy and sparsity of the resulting differentiation operators. Additionally, while we do not show them, the $L_2$ errors for $\frac{\partial u_1 }{\partial x}$, $\frac{\partial u_1 }{\partial y}$, $\frac{\partial u_2}{\partial x}$, and $\frac{\partial u_2}{\partial y}$ are very similar to the errors for $\frac{\partial u_1 }{\partial x}$ and $\frac{\partial u_2}{\partial x}$, which are shown in Tables~\ref{L2 sharingan adjacency error 1} and \ref{L2 sharingan adjacency error 2}. \\

\noindent We make a couple of observations. The first observation is that determining the adjacency matrix using a minimum spanning tree fails to produce accurate meshfree operators. We do not display the results in Table~\ref{L2 error simple_1} and Table~\ref{L2 error simple_2} because the error blows up as the number of points increases. The second observation is that determining the adjacency matrix using the Delaunay degree 1 method results in a larger error than the determining the adjacency matrix using the Delaunay degree 2 method. Both Delaunay degree 2 and Euclidean radius methods result in similar errors for this test case and show similar convergence behavior.\\

\noindent When computing $\frac{\partial u_1 }{\partial x}$, the rate of convergence of the error is approximately $0.5$, while the rate of convergence of the error when computing $\frac{\partial u_2 }{\partial x}$ is around $2$ for the Euclidean Radius method and $1$ for the Delaunay degree 2 method. Note that, since $\frac{\partial u_2}{\partial x}\approx 0$ near the boundaries (as discussed in Section~\ref{sec:convergence_rates}), this effectively tests only accuracy of the interior stencil. These results suggest that interior stencils constructed using the Euclidean Radius approach are second order accurate, while interior stencils constructed using the Delaunay degree 2 approach are only first order accurate. \\

\noindent  To differentiate between the Euclidean Radius and Delaunay Degree 2 notion of adjacency, we perform the same numerical experiment but this time on $\Omega_2$. Tables \ref{L2 sharingan adjacency error 1} and \ref{L2 sharingan adjacency error 2} display the $L_2$ errors for the Euclidean Radius and Delaunay Degree 2 methods on $\Omega_2$. \\

% For clarity, this was not just eliminating the appropriate points from Table \ref{L2 error simple_1} and \ref{L2 error simple_2} and recalculating the error, but rather reconstructing the summation by parts differentiation matrices on  through the defined notions of adjacency and then seeing how the new differentiation matrices perform. \\

\begin{table}
    \centering
    \renewcommand{\arraystretch}{1.5} % Adjust the number to increase or decrease row spacing
    \begin{tabular}{|P{1.3cm}|c|c|c|c|c|}
    \hline
    \multicolumn{1}{|c|}{Grid} & \multicolumn{2}{c|}{Euclidean Radius}  & \multicolumn{2}{c|}{Delaunay Deg 2}  \\
    \hline
    $1$ & $0.3547$ &  & $0.3925,$ & \\
    \hline
    $2$ & $0.2695$ & $0.3963$ & $0.2757$ &  $0.5096$ \\
    \hline
    $3$ & $0.1973$ & $0.4499$ & $0.1968$ &  $0.4864$\\
    \hline
    $4$ & $0.1440$ & $0.4543$ & $0.1404$ &  $0.4872$ \\
     \hline
    $5$ & $0.1018$ & $0.5003$  & $0.1297$ & $0.1144$   \\
     \hline
    \end{tabular}
    \caption{$L_2$ error for $\frac{\partial u_1 }{\partial x}$ on domain $\Omega_2$.}
    \label{L2 sharingan adjacency error 1}
\end{table}

\begin{table}
    \centering
    \renewcommand{\arraystretch}{1.5} % Adjust the number to increase or decrease row spacing
    \begin{tabular}{|P{1.3cm}|c|c|c|c|c|}
    \hline
    \multicolumn{1}{|c|}{Grid} & \multicolumn{2}{c|}{Euclidean Radius}  & \multicolumn{2}{c|}{Delaunay Deg 2}  \\
    \hline
    $1$ & $0.1622$ &  & $0.1818$ &\\
    \hline
    $2$ & $0.1000$ & $0.6978$ & $0.1259$ & $0.5301$ \\
    \hline
    $3$ & $0.07470$ & $0.4208$ & $0.09318$ & $0.4342$\\
    \hline
     $4$ & $0.05211$ & $0.5195$ & $ 0.06450$ & $0.5307$  \\
     \hline
     $5$ & $0.03841$ & $0.4400$  & $ 0.07840$ & $-0.2816$  \\
     \hline
    \end{tabular}
    \caption{$L_2$ error for $\frac{\partial u_2 }{\partial x}$ on domain $\Omega_2$.}
    \label{L2 sharingan adjacency error 2}
\end{table}

\noindent From Tables \ref{L2 sharingan adjacency error 1} and \ref{L2 sharingan adjacency error 2}, we see that the Euclidean Radius and Delaunay degree 2 method both gives similar errors and convergence rates for the first four grid sizes. However, for the densest grid, the Euclidean Radius continues to converge at a rate of $\frac{1}{2}$ while the errors for the Delaunay degree 2 method essentially plateau. Hence, all numerical results presented later in this paper will use the Euclidean Radius notion of adjacency to build meshfree SBP differentiation operators. \\

\subsection{Details on the computational implementation} \label{Notes on Implementation}
    \noindent For large sets of points in $S$, care must be taken to construct the SBP matrices in an efficient manner. For example, from Table \ref{L2 error simple_1} and \ref{L2 error simple_2}, there are $1132984$ points in $S$ for the $n_x = n_y = 1200, n_b = 4000$ case (Grid 5). Hence $\fnt{Q}_x, \fnt{Q}_y, \fnt{E}_x, \fnt{E}_y, \fnt{S}_x, \fnt{S}_y, \fnt{L}$, and $\fnt{D}_x, \fnt{D}_y$ are all $1132984$ by $1132984$ matrices. 
    %This makes it imperative that the matrices are constructed effectively. 
    We discuss a few implementation details used in our numerical experiments.     The full code can be found in the reproducibility repository \cite{} %in the supplemented material. 
    In addition to using vectorized indexing and efficient native MATLAB matrix operations, we used the following additional  steps to reduce the computational cost of computing SBP matrices: 

    \begin{enumerate}

        \item  All matrices were constructed in MATLAB using various built-in packages and functions. Sparse matrices were utilized to reduce memory and computational costs. %The main rule of thumb was to use sparse matrices and avoid for loops at all cost (In fact, if done right, there is no need to loop through any set of points). 

        \item In order to effectively calculate $\fnt{L}$, we construct it given the adjacency matrix $\fnt{A}$. However, constructing $\fnt{A}$ using a naive implementation of the Euclidean Radius method (e.g., create a distance matrix using \verb$pdist$ in MATLAB) is infeasible for large point sets. Instead, we construct the adjacency matrix using the Euclidean distance K-d tree implementation \verb$KDTreeSearcher$ in Matlab \cite{KDTreeSearcher}.

        \item Because the Delaunay triangulation sometimes connects nodes across boundaries of the interior circles in $\Omega_2$, the resulting errors are very large at the boundaries. We ensure this does not happen for the Delaunay Degree 2 triangulation method by manually removing connections between nodes situated across interior circle boundaries. 
        
        \item As mentioned in section \ref{subsec: testing H}, before solving \eqref{quadratic op} is computationally expensive for large point sets. We use a splitting conic solver to reduce the computational cost of solving \eqref{quadratic op} \cite{scs}.
        
    \end{enumerate}
    
    \noindent Table~\ref{Algo time} shows the computational runtimes of several steps involved in generating the SBP differentiation operators for the $n_x = n_y = 1200, n_b = 4000$ case. Notice that the majority of the time was spent constructing $\fnt{\Psi}_x $ and $\fnt{\Psi}_y$. This was because a large $1132984$ by $1132984$ matrix system is solved in equation \ref{systems of equations}. This can be potentially sped up with a more efficient numerical method (we utilize the default solver implemented in MATLAB's ``backslash''). For example, one can invert \textbf{L} efficiently using an algebraic multigrid solver \cite{Inverting_L}. %However, even using a naive matrix solver, the current speed is already very acceptable. Hence, while tricky, it is possible to create large summation by matrices quickly. 
    %Finally, we note that the actual derivative matrix $\fnt{D}_x, \fnt{D}_y$ is not calculated. This is because when using it to approximate the derivative of some function $u(x,y)$, we can do the following:
% \begin{align}
%      \fnt{D}_x = \fnt{H}^{-1}\fnt{Q}_x \\
%      \fnt{D}_x\fnt{u} = \fnt{H}^{-1}(\fnt{Q}_x \fnt{u})
% \end{align}

    \begin{table}
    \centering
    \caption{Time taken for each matrix generation of the SBP  matrices (seconds)}
    \begin{tabular}{|c|c|c|c|c|c|c|c|}
    \hline
     $\fnt{A}$ & $\fnt{L}$ & $\fnt{E}_x, \fnt{E}_y$ & $\fnt{\fnt{\Psi}}_x, \fnt{\fnt{\Psi}}_y$ & $\fnt{S}_x, \fnt{S}_y$ & $\fnt{Q}_x, \fnt{Q}_y$ & $\fnt{H}_{\rm opt}$ & Total Time \\
    \hline 
    $5.0217$ & $0.9286$ & $0.0128$ & $163.9192$ & $8.8232 $ & $0.3200$ & $5.5201$ & $189.5627$ \\
    
     \hline
    \end{tabular}
     
    \label{Algo time}
\end{table}

\section{A robust first order accurate meshfree method} 
\label{sec: numerical method}

\noindent In this section, we apply our meshfree SBP operators to the numerical solution of systems of nonlinear conservation laws:
\begin{gather*}
        \frac{\partial \bm{u}}{\partial t}  + \frac{\partial \bm{f}_x(\bm{u})}{\partial x} + 
        \frac{\partial \bm{f}_y(\bm{u})}{\partial y} = \bm{0},\\ \bm{u}(x, y) = \bm{f}_{bc}(x, y), \quad \forall (x,y) \in M \cap S \label{nonlinear conservation law}
  \end{gather*}
  Here, $\bm{u} \in [L^2(\Omega)]^n$, $\bm{f}_x, \bm{f}_y, \bm{f}_{bc} \in [C^0(\mathbb{R}^n)]^n$, and $M$ is an appropriate subset of $\partial \Omega$ on which we enforce boundary conditions. 

 \subsection{Notation}
  \noindent To begin, we introduce some notation. Recall the notation used in \eqref{nonlinear conservation law}, where $\bm{u} \in [L^2(\Omega)]^n$ denotes the solution we are trying to approximate, where $\bm{u} = [u^1, u^2, ...\,, u^n]$. Here, $u^i$ denotes individual components of the solution, where $u^1, ... \, , u^n \in L^2(\Omega)$. We denote the numerical approximation of point values of $\bm{u}$ evaluated on $(x_i, y_i) \in S$ as:

  \begin{equation}
      \accentset{\rightharpoonup}{\fnt{u}} = 
      \renewcommand{\arraystretch}{1.5} % Adjusts the row spacing
    \begin{pmatrix} 
        [u^1(x_1, y_1)\, , ... \,, u^n(x_1, y_1)]  \\[1em] % 
        [u^1(x_2, y_2)\, , ... \,, u^n(x_2, y_2)]  \\[1em] 
                
        \vdots \\[0.5em] % Center \vdots and add space

        [u^1(x_N, y_N)\, , ... \,, u^n(x_N, y_N)]
    \end{pmatrix} 
  \end{equation}

  \noindent We denote $\accentset{\rightharpoonup}{\fnt{u}}_i$ as the $i^{th}$ entry of  $\accentset{\rightharpoonup}{\fnt{u}}$. Hence $\accentset{\rightharpoonup}{\fnt{u}}_i = [u^1(x_i, y_i)\, , ... \,, u^n(x_i, y_i)]$. Recall again from \eqref{nonlinear conservation law} that $\bm{f}_x: \mathbb{R}^n\rightarrow \mathbb{R}^n$, where $\bm{f}_x  = [f_x^1, f_x^2 \, , ... \, , f_x^n]$ and $f_x^1\, , ... \, , f_x^n \in C^0(\mathbb{R}^n)$. \\

    \newcommand{\halfvec}[1]{\accentset{\rightharpoonup}{#1}} 

    \noindent We now define the behavior of $\bm{f}_x(\halfvec{\fnt{u}})$ (e.g., $\bm{f}_x(\bm{u})$ evaluated over $S$) as: 
  
   \begin{equation}
      \renewcommand{\arraystretch}{1.5} % Adjusts the row spacing
      \bm{f_x}(\halfvec{\fnt{u}}) = 
    \begin{pmatrix} 
        \bm{f}_x(\halfvec{\fnt{u}}_1)  \\[1em] % 
        \bm{f}_x(\halfvec{\fnt{u}}_2)  \\[1em]

        \vdots \\[0.5em] % Center \vdots and add space

        \bm{f}_x(\halfvec{\fnt{u}}_N)  \\
    \end{pmatrix} = \begin{pmatrix} 
        [f_x^1(\halfvec{\fnt{u}}_1)\, , ... \, , f_x^n(\halfvec{\fnt{u}}_1)]  \\[1em] % 
        
        [f_x^1(\halfvec{\fnt{u}}_2)\, , ... \, , f_x^n(\halfvec{\fnt{u}}_2)]  \\[1em] %  

        \vdots \\[0.5em] % Center \vdots and add space

         [f_x^1(\halfvec{\fnt{u}}_N)\, , ... \, , f_x^n(\halfvec{\fnt{u}}_N)]  \\[1em] %    
    \end{pmatrix} 
  \end{equation}

  \noindent Furthermore, consider the matrix-vector multiplication $\fnt{D}_x\bm{f}_x(\halfvec{\fnt{u}})$. We define this behavior as follows:
\begin{equation}
    \fnt{D}_x\bm{f}_x(\halfvec{\fnt{u}}) = 
    \renewcommand{\arraystretch}{1.5} % Adjusts the row spacing
    \begin{pmatrix}
         \sum_{i = 1}^N  (\fnt{D}_{x})_{1i} \bm{f}_x(\halfvec{\fnt{u}}_i) \\[1em] 

         \sum_{i = 1}^N  (\fnt{D}_{x})_{2i} \bm{f}_x(\halfvec{\fnt{u}}_i) \\[1em] 
        
        \vphantom{\fnt{D}_x} \vdots \\[0.5em] % Center \vdots and add space

       \sum_{i = 1}^N  (\fnt{D}_{x})_{Ni} \bm{f}_x(\halfvec{\fnt{u}}_i) \\[1em] 
    \end{pmatrix} = \begin{pmatrix} 
          \sum_{i = 1}^N  (\fnt{D}_{x})_{1i}[f_x^1(\halfvec{\fnt{u}}_i)\, , ... \, , f_x^n(\halfvec{\fnt{u}}_i)]  \\[1em] % 
        
          \sum_{i = 1}^N  (\fnt{D}_{x})_{2i}[f_x^1(\halfvec{\fnt{u}}_i)\, , ... \, , f_x^n(\halfvec{\fnt{u}}_i)]  \\[1em] %  

        \vdots \\[0.5em] % Center \vdots and add space

           \sum_{i = 1}^N  (\fnt{D}_{x})_{Ni}[f_x^1(\halfvec{\fnt{u}}_i)\, , ... \, , f_x^n(\halfvec{\fnt{u}}_i)]  \\[1em] %    
    \end{pmatrix} 
\end{equation}
    %In other words, fluxes $\bm{f}_x$ are broadcasted 

  \subsection{Discretization}

  \noindent Inserting our meshfree SBP differentiation operators into \eqref{nonlinear conservation law} yields:
  \begin{equation}
        \fnt{\halfvec{u}}_t + \fnt{D}_x\fnt{f}_x(\fnt{\halfvec{u}}) + \fnt{D}_y\fnt{f}_y(\fnt{\halfvec{u}}) = \bm{0} 
  \end{equation}
  Multiplying $\fnt{H}$ to both sides yields:
  \begin{equation}
       \fnt{H}\fnt{\halfvec{u}}_t + \fnt{Q}_x\fnt{f}_x(\fnt{\halfvec{u}}) + \fnt{Q}_y\fnt{f}_y(\fnt{\halfvec{u}}) = 0 
  \end{equation}
Using \eqref{Q-E property}, we have that
    \begin{equation}
        \fnt{Q}_x\fnt{f}_x(\fnt{\halfvec{u}}) = \fnt{E}_x \fnt{f}_x(\fnt{\halfvec{u}}) - \fnt{Q}_x^T\fnt{f}_x(\fnt{\halfvec{u}}).
        \label{Q_BC_equation}
    \end{equation}

\noindent We make the observation that the term $\fnt{E}_x \fnt{f}_x(\fnt{\halfvec{u}})$ only depends on the points in $S \cap \partial \Omega$. Therefore we make the substitution $\halfvec{\fnt{u}}$ with $\halfvec{\fnt{u}}_{BC}$ where:
\begin{equation}
    \forall (x_i, y_i) \in S: 
    (\halfvec{\fnt{u}}_{BC})_{i}  = 
                \left\{
                    \begin{array}{lr}
                     \bm{f}_{bc}(x_i, y_i) & \text{if $(x_i, y_i) \in M$} \\
                     \fnt{u}_i & \text{otherwise} 
                    \end{array}
                \right. \label{u_BC}
\end{equation}

\noindent This allows us to weakly enforce the boundary conditions by replacing the right hand side of (\ref{Q_BC_equation}) by the following
\begin{equation}
      \fnt{E}_x \fnt{f}_x(\fnt{u}_{BC}) - \fnt{Q}_x^T\fnt{f}_x(\fnt{u}). 
\end{equation}

\noindent Using the SBP property \eqref{Q-E property} again yields:
\begin{equation}
    \fnt{E}_x \fnt{f}_x(\halfvec{\fnt{u}}_{BC}) - \fnt{Q}_x^T\fnt{f}_x(\halfvec{\fnt{u}}) = \fnt{Q}_x \fnt{f}_x(\halfvec{\fnt{u}}) + \fnt{E}_x({\fnt{f}_x(\halfvec{\fnt{u}}_{BC}) - \fnt{f}_x(\halfvec{\fnt{u}})})
\end{equation}

\noindent Together, this results in an algebraic formulation with weakly imposed boundary conditions:
\begin{equation}
    \fnt{H}\halfvec{\fnt{u}}_t + \fnt{Q}_x \fnt{f}_x(\halfvec{\fnt{u}}) + \fnt{Q}_y \fnt{f}_y(\halfvec{\fnt{u}}) + \fnt{E}_x({\fnt{f}_x(\halfvec{\fnt{u}}_{BC}) - \fnt{f}_x(\halfvec{\fnt{u}})})  + \fnt{E}_y({\fnt{f}_y(\halfvec{\fnt{u}}) - \fnt{f}_y(\halfvec{\fnt{u}})})= \bm{0} \label{weak boundary formulation} 
\end{equation}

\subsection{Stabilization}

\noindent The formulation \eqref{weak boundary formulation} corresponds to a non-dissipative ``central'' scheme. However, for solutions of nonlinear conservation laws with sharp gradients, shocks, or other under-resolved solution features, non-dissipative schemes can result in spurious oscillations \cite{leveque1992numerical, leveque2002finite}. To avoid this, we add upwinding-like dissipation using an approach similar to techniques used in \cite{kwan2012conservative}. 

\noindent Denote $\bm{r}_i$ as the $i^{\text{th}}$ entry of $\fnt{Q}_x \fnt{f}_x(\halfvec{\fnt{u}}) + \fnt{Q}_y \fnt{f}_y(\halfvec{\fnt{u}})$. Then, we observe the following:
\begin{align}
    \bm{r}_i &= \sum_{j} (\fnt{Q}_x)_{ij}\fnt{f}_x(\halfvec{\fnt{u}}_j) + (\fnt{Q}_y)_{ij}\fnt{f}_y(\halfvec{\fnt{u}}_j) \label{r_i}\\ 
     &=  2 \sum_{j} (\fnt{Q}_x)_{ij}\frac{(\fnt{f}_x(\halfvec{\fnt{u}}_j) + \fnt{f}_x(\halfvec{\fnt{u}}_i))}{2} + (\fnt{Q}_y)_{ij}\frac{(\fnt{f}_y(\halfvec{\fnt{u}}_j) + \fnt{f}_y(\halfvec{\fnt{u}}_i))}{2} \label{r_i 2}
\end{align}

\noindent Note that moving from \eqref{r_i} to \eqref{r_i 2} is valid because $\fnt{Q_x} \bm{1} = \bm{0}$ by the first order consistency condition, which implies that $\sum_{j} (\fnt{Q}_x)_{ij}  = 0$. Because multiplying both sides by $\fnt{f}_x(\halfvec{\fnt{u}}_i)$ still yields $\bm{0}$, $\sum_{j} (\fnt{Q}_x)_{ij} \fnt{f}_x(\halfvec{\fnt{u}}_i) = \sum_{j} (\fnt{Q}_y)_{ij} \fnt{f}_y(\halfvec{\fnt{u}}_i) = \bm{0}$, implying that \eqref{r_i 2} is equivalent to \eqref{r_i}. \\

\noindent  We can rewrite \eqref{r_i 2} as 
\begin{align}
\bm{r}_i(\halfvec{\fnt{u}}) = 2 \sum_{j} \frac{1}{2} \left(\begin{bmatrix} 
\fnt{f}_x(\halfvec{\fnt{u}}_i) \\
\fnt{f}_y(\halfvec{\fnt{u}}_i)  
\end{bmatrix} + \begin{bmatrix} 
\fnt{f}_x(\halfvec{\fnt{u}}_j) \\
\fnt{f}_y(\halfvec{\fnt{u}}_j)  
\end{bmatrix}\right) \cdot \begin{bmatrix} 
    (\fnt{Q}_x)_{ij} \\
(\fnt{Q}_y)_{ij}  
\end{bmatrix} \label{r_i 3}
\end{align}

\noindent We make the observation that \eqref{r_i 3} is similar to a central flux, where $(\fnt{Q}_x)_{ij}, (\fnt{Q}_y)_{ij}$ are playing the role of the ``normal vector''. However, because the central flux is non-dissipative, we replace the central flux with the local Lax Friedrichs flux in our formulation as follows:
\begin{equation}
    \bm{r}_i(\halfvec{\fnt{u}}) = 2 \sum_{j \in N(i)} \|\bm{n}_{ij}\| \frac{1}{2} \left(\begin{bmatrix} 
\fnt{f}_x(\halfvec{\fnt{u}}_i) \\
\fnt{f}_y(\halfvec{\fnt{u}}_i)  
\end{bmatrix} + \begin{bmatrix} 
\fnt{f}_x(\halfvec{\fnt{u}}_j) \\
\fnt{f}_y(\halfvec{\fnt{u}}_j)  
\end{bmatrix}\right) \cdot \frac{\bm{n}_{ij}}{\|\bm{n}_{ij}\|} - \frac{\lambda}{2}(\halfvec{\fnt{u}}_j - \halfvec{\fnt{u}}_i), \label{r_i 4}
\end{equation}

\noindent where we use the following definition for the algebraic ``normal'' vector $\bm{n}_{ij}$:
\begin{equation}
\label{eq:nij}
\bm{n}_{ij} = \begin{bmatrix} 
(\fnt{Q}_x)_{ij} \\
(\fnt{Q}_y)_{ij} \end{bmatrix}.
\end{equation}

\noindent Finally, as noted in \cite{kwan2012conservative}, one can also substitute a general Riemann solver into this formulation by introducing the numerical flux $\bm{f}(\bm{u}_L, \bm{u}_R, \bm{n}_{LR})$
\begin{equation}
    \bm{r}_i(\halfvec{\fnt{u}}) = 2 \sum_{j \in N(i)} \|\bm{n}_{ij}\| \bm{f}(\halfvec{\fnt{u}}_i, \halfvec{\fnt{u}}_j, \bm{n}_{ij})
    \label{r_i 5}
\end{equation}
For example, taking the numerical flux to be the local Lax-Friedrichs flux 
\[
\bm{f}(\bm{u}_L, \bm{u}_R, \bm{n}_{LR}) = \frac{\bm{f}(\bm{u}_L) + \bm{f}(\bm{u}_R)}{2}\bm{n}_{LR} - \frac{\lambda}{2}(\bm{u}_R - \bm{u}_L)
\]
recovers formulation \eqref{r_i 4}.\\

\begin{remark}
\noindent If $\lambda$ is an upper bound on the maximum wave speed of the system, then \eqref{r_i 5} recovers a semi-discrete version of the invariant domain preserving discretization of \cite{guermond2019invariant}, at least at interior points. This implies that, for a sufficiently small time-step, the numerical solution will preserve e.g., positivity of density and internal energy. We note that invariant domain preservation is only guaranteed if the numerical flux $\bm{f}(\bm{u}_L, \bm{u}_R, \bm{n}_{LR})$ is the local Lax-Friedrichs flux, and does not hold for more general positivity-preserving numerical fluxes (e.g., the HLLC flux \cite{batten1997choice, toro2019hllc}). \\
\end{remark}

\noindent Using \eqref{r_i 4} or \eqref{r_i 5}, we can simplify our formulation in \eqref{weak boundary formulation} to 
\begin{equation}
    \halfvec{\fnt{u}}_t  =  \textbf{rhs}(\halfvec{\fnt{u}}) \label{final formulation}
\end{equation}

\noindent where \textbf{rhs(u)} is computed by Algorithm \ref{algo 1}:  \\

\begin{algorithm} 
    \caption{RHS Function}
    \label{Algorithm 1}
    \begin{algorithmic}[1]
        \State $\text{Inputs: } \halfvec{\fnt{u}}, \fnt{H}, \fnt{Q}_x, \fnt{Q}_y, \bm{f}_x, \bm{f}_y, \bm{f}_{bc}, \bm{x}_{bc}, \bm{y}_{bc}, \bm{f}, t$
        \State $\halfvec{\fnt{u}}_{bc} \gets$ as defined in \eqref{u_BC}
        \For{$(i, j) \text{ s.t.} Q_{ij} \neq 0$}
            \State $\bm{n}_{ij} \gets  \begin{bmatrix} 
            (\fnt{Q}_x)_{ij} \\
            (\fnt{Q}_y)_{ij}  
            \end{bmatrix}$
            \State $\fnt{du}[i] \gets \fnt{du}[i] + \|\bm{n}_{ij}\| 
            \bm{f}\left(\halfvec{\fnt{u}}_i, \halfvec{\fnt{u}}_j, \frac{\bm{n}_{ij}}{\|\bm{n}_{ij}\|}\right)$
            %\left(\begin{bmatrix} 
    % \fnt{f}_x(\halfvec{\fnt{u}}_i) \\
    % \fnt{f}_y(\halfvec{\fnt{u}}_i)  
    % \end{bmatrix} + \begin{bmatrix} 
    % \fnt{f}_x(\halfvec{\fnt{u}}_j) \\
    % \fnt{f}_y(\halfvec{\fnt{u}}_j)  
    % \end{bmatrix}\right) \cdot \frac{\bm{n}_{ij}}{\|\bm{n}_{ij}\|} - \lambda_{ij}(\halfvec{\fnt{u}}_j - \halfvec{\fnt{u}}_i)$
        \EndFor
    \State $\fnt{du} \gets \fnt{du} + \fnt{E}_x(\fnt{f}_x(\halfvec{\fnt{u}}_{bc}) - \fnt{f}_x(\halfvec{\fnt{u}})) + \fnt{E}_y(\fnt{f}_y(\halfvec{\fnt{u}}_{bc}) - \fnt{f}_y(\halfvec{\fnt{u}}))$
    \State $\textbf{rhs(u)} \gets -\fnt{H}^{-1}\fnt{du}$
    \end{algorithmic} \label{algo 1}
    \end{algorithm}

\noindent Finally, to numerically integrate \eqref{final formulation}, one can use any suitable time-stepping method. Unless stated otherwise, we utilize the 4-stage 3rd order Strong Stability Preserving (SSP) Runge-Kutta method \cite{kraaijevanger1991contractivity, conde2018embedded, ranocha2022optimized}. All numerical results utilize the DifferentialEquations.jl library \cite{rackauckas2017differentialequations}, the Trixi.jl library \cite{trixi_1}, \cite{trixi_2}, and are implemented in the Julia programming language \cite{bezanson2017julia}.

\section{Numerical results} \label{sec: results}
\noindent We apply the numerical method described in Section~\ref{sec: numerical method} to the advection equation and compressible Euler equations. We begin by analyzing numerical rates of convergence in the discrete $L^2$ norm for analytical solutions. 

\subsection{Advection Equation}
\noindent Consider the advection equation with the following boundary and initial conditions:
\begin{align}
     \frac{\partial u}{\partial t} + a\frac{\partial u}{\partial x} + b\frac{\partial u}{\partial y}= 0 \label{advection equation}, \\ 
     u(x, y, t) = u_{bc}(x, y, t) \quad \forall (x, y) \in \Gamma_{\rm in}, \label{advection equation bc}\\ 
     u(x, y, 0) = u_0(x, y), \label{advection equation initial conditions}
\end{align}
where we have defined the inflow boundary $\Gamma_{\rm in} = \partial \Omega \; \;  s.t. \; [a, b] \cdot [n_x, n_y] < [0, 0]$. \\

\noindent We test the numerical method on \eqref{advection equation}-\eqref{advection equation initial conditions} with $a, b = 1, \frac{1}{2}$, final time $t_f = 0.7$, and exact solution
\begin{equation}
    u(x, y, t) = \sin\left(\frac{\pi}{6}(x-t)\right)\sin\left(\frac{\pi}{6}(y - \frac{1}{2}t)\right) 
    %\quad   u_{bc}(x, y, t) = \sin(\frac{\pi}{6}(x - t))\sin(\frac{\pi}{6}(y-t)).
\end{equation}
We impose the inflow boundary by setting $\bm{u}_{bc}$ to the exact solution. Table~\ref{L2 error advection} displays the $L^2$ errors on domains $\Omega_1$ and $\Omega_2$. Grids 1, \ldots, 5 refer to the grids used in Section~\ref{sec: finding adjacency} to compute approximation errors under different methods for computing the adjacency matrix).\\
    
\begin{table}
\centering
\renewcommand{\arraystretch}{1.5} % Adjust the number to increase or decrease row spacing
\begin{tabular}{|P{1.3cm}|c|c|c|c|c|}
\hline
\multicolumn{1}{|c|}{Grid} & \multicolumn{1}{c|}{$L^2$ errors on $\Omega_1$} & \multicolumn{1}{c|}{Rate} & \multicolumn{1}{c|}{$L^2$ errors on $\Omega_2$} & \multicolumn{1}{c|}{Rate}  \\
\hline
$1$ & $0.06515$ &  & $0.06825$ &  \\
\hline
$2$ & $0.03334$ & $0.9665$ & $0.03524$ & $0.9536$ \\
\hline
$3$ & $0.01660$ & $1.006$ & $0.01825$ & $0.9493$ \\
\hline
 $4$ & $0.008426$ & $0.9782$ & $0.009448$ & $0.9498$ \\
 \hline
 $5$ & $0.004177$ & $1.012$ & $0.004720$ & $1.177$  \\
 \hline
\end{tabular}
\caption{$L^2$ errors for the advection equation on the domains $\Omega_1$ and $\Omega_2$.}
\label{L2 error advection}
\end{table}
\noindent As seen in both cases, the $L_2$ error converges with a rate of $1$ despite having only a convergence rate of $0.5$ in the differentiation experiments in Tables \ref{L2 error simple_1} and \ref{L2 error simple_2}.

% \subsection{Burgers' equation}

% \bnote{Add plots for Burgers'}

\subsection{Compressible Euler Equations}
\noindent We now consider the 2D compressible Euler equations:
\begin{equation}
    \frac{\partial}{\partial t} \left( \begin{matrix} \rho \\ \rho v_1 \\ \rho v_2 \\ \rho e \end{matrix} \right) + 
    \frac{\partial}{\partial x} \left( \begin{matrix} \rho v_1 \\ \rho v_1^2 + p \\ \rho v_1 v_2 \\ (\rho e + p)v_1 \end{matrix} \right) + 
    \frac{\partial}{\partial y} \left( \begin{matrix} \rho v_2 \\ \rho v_1 v_2 \\ \rho v_2^2 + p\\ (\rho e + p)v_2 \end{matrix} \right) =
    \left( \begin{matrix} 0 \\ 0 \\ 0 \\ 0\end{matrix} \right),
\end{equation}
\noindent where $\rho$ is the density, $v_1, v_2$ are the velocities, and $e$ is the specific total energy. The pressure $p$ is given by the ideal gas law
\begin{equation}
    p = (\gamma - 1)\left(\rho e - \frac{1}{2}\rho(v_1^2 + v_2^2)\right)
\end{equation}
\noindent where $\gamma = 1.4$ is the ratio of the specific heats. \\

\noindent We test the numerical method on the density wave solution:
\begin{equation}
    v_1 = 0.1, \quad v_2 = 0.2, \quad \rho = 1 + \frac{1}{2}\sin\left(\frac{1}{3}(x + y - t(v_1 + v_2))\right), \quad p = 2.5 
\end{equation}

\noindent Table \ref{L2 error euler lax} displays $\| u(t_f) - \hat{u}(t_f)\|_{L^2(\Omega)}$ and $\| u(t_f) - \hat{u}(t_f)\|_{L^2(\Omega_2)}$ at final time $t_f = 0.7$. We impose reflective slip wall boundary conditions on all domains using a similar technique mentioned in \cite{Vegt2002SlipFB}.\\

\begin{table}
    \centering
%    \caption{$\| u - \hat{u}(t_f)\|_{L^2(\Omega)} $  and $ \| u - \hat{u}(t_f)\|_{L^2(\Omega_2)}$ }
    \renewcommand{\arraystretch}{1.5} % Adjust the number to increase or decrease row spacing
    \begin{tabular}{|P{1.3cm}|c|c|c|c|c|}
    \hline
    \multicolumn{1}{|c|}{Grid} & \multicolumn{1}{c|}{$L^2$ errors on $\Omega_1$} & \multicolumn{1}{c|}{Rate} & \multicolumn{1}{c|}{$L^2$ errors on $\Omega_2$} & \multicolumn{1}{c|}{Rate}  \\
    \hline
    $1$ & $0.3979$ & & $0.3102$ &  \\
    \hline
    $2$ & $0.2168$ & $0.8754$   & $0.1777$  &  $0.8037$ \\
    \hline
    $3$ & $0.1142$ & $0.9428$ & $0.09691$ & $0.8747$\\
    \hline
     $4$ & $0.05889$ & $0.9555$ & $0.05112$ & $0.9227$ \\
     \hline
     $5$ & $0.03000$ & $0.9731$  & $0.02641$ &  $0.9528$  \\
     \hline
    \end{tabular}
    \caption{$L^2$ errors for the compressible Euler equations under the local Lax-Friedrichs flux.} 
    \label{L2 error euler lax}
\end{table}

\noindent We first consider using the local Lax-Friedrichs flux with a Davis wavespeed estimate \cite{davis1988simplified}. We observe in Table~\ref{L2 error euler lax} that the convergence rate approaches one. However, the magnitude of the error is an order of magnitude larger than the errors reported for the advection equation in Table~\ref{L2 error advection}. We believe this to be due to the highly dissipative nature of the local Lax-Friedrichs flux. \\

\noindent To confirm this, we investigate the HLLC (Harten-Lax-van Leer contact) flux  \cite{HLLC_paper}, which is known to be less dissipative than the local Lax-Friedrichs flux. We replace the Lax-Friedrich Flux with the HLLC flux in algorithm \ref{algo 1} and show the results for $\| u(t_f) - \hat{u}(t_f)\|_{L^2(\Omega)}$ and $\| u(t_f) - \hat{u}(t_f)\|_{L^2(\Omega_2)}$ in Table~\ref{L2 error euler HLLC}. Under the HLLC flux, the convergence rate remains near one, but the magnitude of error is now similar to the magnitude of errors observed for the advection equation in Table~\ref{L2 error advection}. %Hence, this makes the HLLC flux a better alternative than the Lax-Friedrich Flux for the Euler equations.
\\

\begin{table}
    \centering
    \renewcommand{\arraystretch}{1.5} % Adjust the number to increase or decrease row spacing
    \begin{tabular}{|P{1.3cm}|c|c|c|c|c|}
    \hline
    \multicolumn{1}{|c|}{Grid} & \multicolumn{1}{c|}{$L^2$ errors on $\Omega_1$} & \multicolumn{1}{c|}{Rate} & \multicolumn{1}{c|}{$L^2$ errors on $\Omega_2$} & \multicolumn{1}{c|}{Rate}  \\
    \hline
    $1$ & $0.05017$ & & $0.05071$ &  \\
    \hline
    $2$ & $0.02536$ & $0.9842$ & $0.02580$  & $0.9749$  \\
    \hline
    $3$ & $0.01253$ & $1.0171$ & $0.01277$ & $1.0146$ \\
    \hline
     $4$ & $0.006210$ & $1.0127$ & $0.006323$ & $1.0141$ \\
     \hline
     $5$ & $0.003069$ & $1.0168$ & $0.003114$ &  $1.0218$  \\
     \hline
    \end{tabular}
    \caption{$L^2$ errors for the compressible Euler equations under the HLLC flux.}    
    \label{L2 error euler HLLC}
\end{table}

\noindent Finally, we notice that there is not a significant difference in the errors achieved on $\Omega_1$ and $\Omega_2$. This  differs from what we observed when computing $L^2$ errors for the approximation of derivatives using SBP operators; in those experiments, the errors were larger on domain $\Omega_2$. \\

\noindent Next, we test the numerical method on a\eqref{advection equation}-\eqref{advection equation initial conditions} with a $C^0$ solution: 
\begin{equation}
    v_1, v_2 = [0.1, 0.2] \quad \rho = 1 + \frac{1}{2}\left| \sin\left(\frac{1}{3}(x + y - t(v_1 + v_2))\right) \right| \quad p = 2.5 
\end{equation}
We again utilize parameters $a = 1, b = \frac{1}{2}$ and run to final time $t_f = 0.7$. Table~\ref{L2 error euler C0} shows the errors and computed convergence rates. We observe that the convergence rate appears to approach a value less than 1. \\ 

\begin{table}
    \centering
    \renewcommand{\arraystretch}{1.5} % Adjust the number to increase or decrease row spacing
    \begin{tabular}{|P{1.3cm}|c|c|}
    \hline
    \multicolumn{1}{|c|}{Grid} & \multicolumn{1}{c|}{$\| u - \hat{u}\|_{L^2(\Omega)}$} & \multicolumn{1}{c|}{Convergence Rate} \\
    \hline
    $1$ & $0.4851$ &  \\
    \hline
    $2$ & $0.3107$ & $0.6428$  \\
    \hline
    $3$ & $0.1896$ & $0.7126$ \\
    \hline
     $4$ & $0.1131$ & $0.7454$  \\
     \hline
     $5$ & $0.06679$ & $0.7598$   \\
     \hline
    \end{tabular}
    \caption{$L^2$ errors for the compressible Euler equations under a $C^0$ continuous solution.}
    \label{L2 error euler C0}
\end{table}

\noindent We conclude by simulating an ``explosion'' problem, which is given by the following initial condition adapted from \cite{tominec2023residual}:
\begin{equation}
    v_1, v_2 = [0, 0] \quad \rho = \left\{
                \begin{array}{lr}
                1 & \text{if } x^2 + y^2 < 0.4^2 \\
                0.001 & \text{otherwise} 
                \end{array}
            \right\}  \quad p = \rho^\gamma
\end{equation}

\noindent Figure~\ref{fig:explosion_pressure} shows the pressure at various times computed using the HLLC flux in \eqref{r_i 5}. 

\begin{figure}
\centering
% Answer: [trim={left bottom right top},clip]
\includegraphics[width=.32\textwidth, trim={0 0 5em 0}, clip]{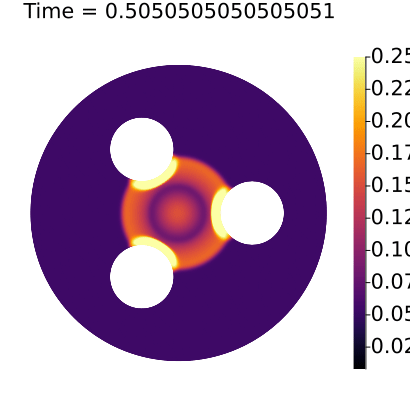}
\includegraphics[width=.32\textwidth, trim={0 0 5em 0}, clip]{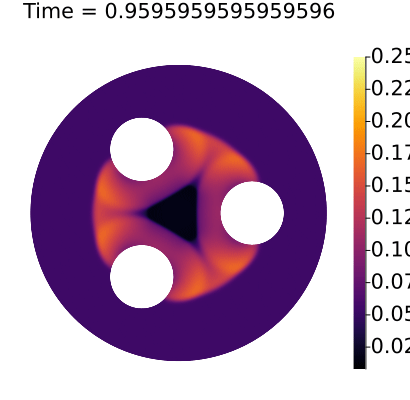}
\includegraphics[width=.32\textwidth, trim={0 0 5em 0}, clip]{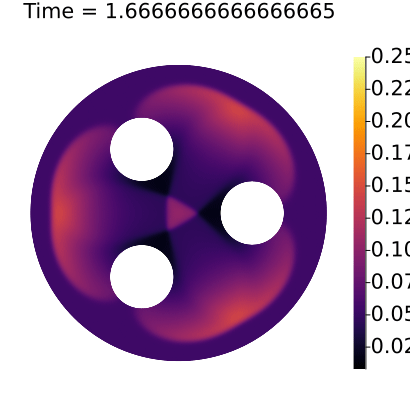}\\
\includegraphics[width=.32\textwidth, trim={0 0 5em 0}, clip]{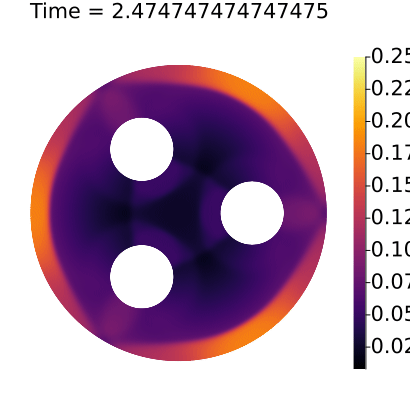}
\includegraphics[width=.32\textwidth, trim={0 0 5em 0}, clip]{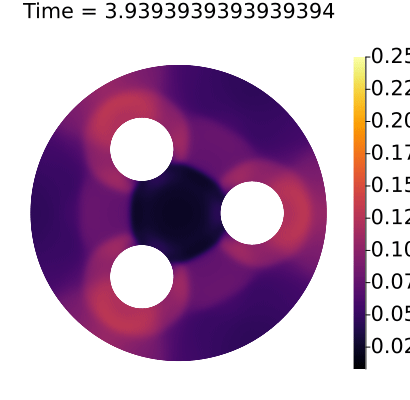}
\includegraphics[width=.32\textwidth, trim={0 0 5em 0}, clip]{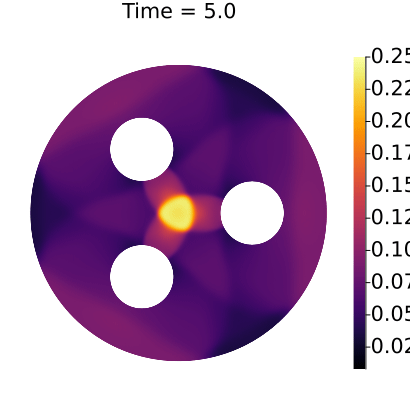}
\caption{The computed pressure using the HLLC flux at various times. The color limits are taken from 0.008 to 0.25.}
\label{fig:explosion_pressure}
\end{figure}

\noindent We additionally run the simulation until final time $t_f = 7$ and compare the solutions computed using the local Lax-Friedrichs and HLLC flux Figure~\ref{fig:explosion}. We observe that the HLLC flux produces less dissipative solutions compared to the local Lax-Friedrichs flux. 

%, again providing reason for why the HLLC flux is better than the Lax-Friedrichs flux for the Euler equations. \\

\begin{figure}
    \centering   
    % % First row
    % \subfigure[Lax-Friedrichs, $t = 0.583$] 
    % {\includegraphics[width=60mm]{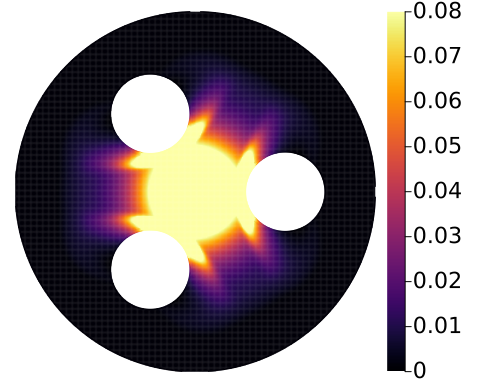}} 
    
    % \subfigure[HLLC, $t = 0.583$] 
    % {\includegraphics[width=60mm]{flux_hllc,t=0_583.png}}

    % % Fourth row
    % \subfigure[Lax-Friedrichs, $t = 4.67$] 
    % {\includegraphics[width=60mm]{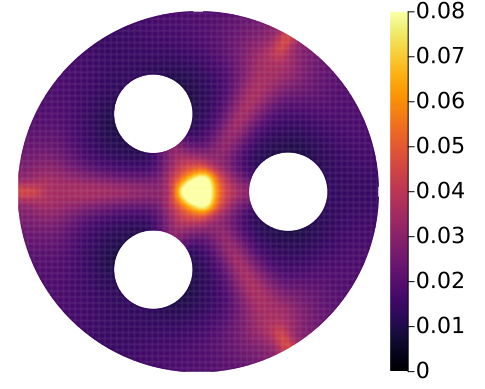}} 
    
    % \subfigure[HLLC, $t = 4.67$] 
    % {\includegraphics[width=60mm]{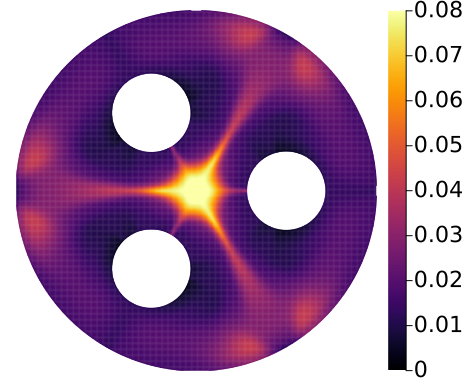}}

    % % Fifth row
    % \subfigure[Lax-Friedrichs, $t = 5.54167$] 
    % {\includegraphics[width=60mm]{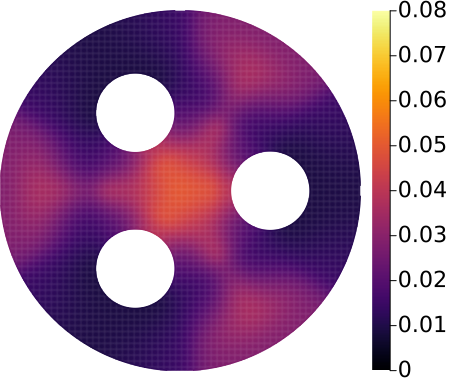}} 
    
    % \subfigure[HLLC, $t = 5.54167$] 
    % {\includegraphics[width=60mm]{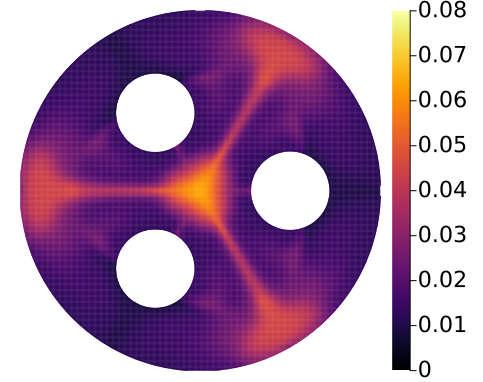}}

    % {Comparison of $\rho$ at different times using Lax-Friedrichs and HLLC fluxes}

    % First row
    
    \begin{minipage}[b]{0.35\textwidth}
        \centering
        \includegraphics[width=\textwidth]{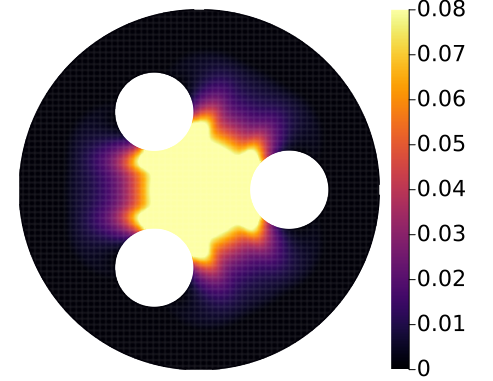}
        \subcaption{Lax-Friedrichs, $t = 0.583$}
    \end{minipage}
    \begin{minipage}[b]{0.35\textwidth}
        \centering
        \includegraphics[width=\textwidth]{flux_hllc,t=0_583.png}
        \subcaption{HLLC, $t = 0.583$}
    \end{minipage}
    
    % % Second row
    % \begin{minipage}[b]{0.45\textwidth}
    %     \centering
    %     \includegraphics[width=\textwidth]{}
    %     \subcaption{Lax-Friedrichs, $t = 1.4583$}
    % \end{minipage}
    % \hfill
    % \begin{minipage}[b]{0.45\textwidth}
    %     \centering
    %     \includegraphics[width=\textwidth]{}
    %     \subcaption{HLLC, $t = 1.4583$}
    % \end{minipage}
    
    % % Third row
    % \begin{minipage}[b]{0.4\textwidth}
    %     \centering
    %     \includegraphics[width=\textwidth]{}
    %     \subcaption{Lax-Friedrichs, $t = 4.083$}
    % \end{minipage}
    % \begin{minipage}[b]{0.4\textwidth}
    %     \centering
    %     \includegraphics[width=\textwidth]{}
    %     \subcaption{HLLC, $t = 4.083$}
    % \end{minipage}
    
    % % Fourth row
    \begin{minipage}[b]{0.35\textwidth}
        \centering
        \includegraphics[width=\textwidth]{flux_lax,t=4_67.png}
        \subcaption{Lax-Friedrichs, $t = 4.67$}
    \end{minipage}
    \begin{minipage}[b]{0.35\textwidth}
        \centering
        \includegraphics[width=\textwidth]{flux_hllc,t=4_67.png}
        \subcaption{HLLC, $t = 4.67$}
    \end{minipage}
    
    % % Fifth row
    \begin{minipage}[b]{0.34\textwidth}
        \centering
        \includegraphics[width=\textwidth]{flux_lax,t=5_54167.png}
        \subcaption{Lax-Friedrichs, $t = 5.54167$}
    \end{minipage}
    \begin{minipage}[b]{0.36\textwidth}
        \centering
        \includegraphics[width=\textwidth]{flux_hllc,t=5_54167.png}
        \subcaption{HLLC, $t = 5.54167$}
    \end{minipage}
    
    % % Sixth row
    % \begin{minipage}[b]{0.45\textwidth}
    %     \centering
    %     \includegraphics[width=\textwidth]{}
    %     \subcaption{Lax-Friedrichs, $t = 7$}
    % \end{minipage}
    % \hfill
    % \begin{minipage}[b]{0.45\textwidth}
    %     \centering
    %     \includegraphics[width=\textwidth]{}
    %     \subcaption{HLLC, $t = 7$}
    % \end{minipage}
    \caption{Comparison of the density $\rho$ at different times using Lax-Friedrichs and HLLC fluxes.}
    \label{fig:explosion}
\end{figure}

\section{Conclusion}
\noindent By using the concept of summation by parts, we were able to create first differentiations that did not require a mesh but only a notion of adjacency. Upon picking a suitable notion of adjacency, namely the Euclidean Radius, we were able to efficiently create such operators. Using these SBP operators, we create a mesh-free numerical method using a flux-based formulation to numerically approximate non-linear conservation laws. The method performs reasonably well and is often first order given sufficient smoothness.  \\

\noindent For reproducibility purposes, please refer to the GitHub repository in \cite{github} for the codebase used to generate the SBP operators and the numerical results.

\section{Acknowledgments}
\noindent The authors gratefully acknowledge support by the National Science Foundation under award NSF DMS-2231482. Chan was also supported by DMS-1943186.

% \section{Previous notes/Ideas just in case}
% Maybe benefits vs costs? (I'm not too sure what advantages there are but the fact that we don't have to care about the geometry; the cost could be not having higher order accuracy; I'm not sure time wise how fast it is right now)
% Time-wise convergence rate would just be whatever order the runge kutta method is (or at least i assume)
% Maybe we want to study how spread out the points should be? 

% \cite{Main_Paper} Is it adaptable to larger set of problems, different types of domains, etc...

% \bnote{Add citations - Positivity preserving paper for compressible Navier-Stokes from Lin, Chan (2023), Wu, Trask, Chan (2022?) on positivity preservation for shallow water, A conservative, consistent, and scalable meshfree mimetic method by Trask et al (2020)}

% \bnote{JC todo: add some "invariant domain preserving" papers}

% \appendix
  % Reduce caption font size

%\pagebreak

%fix brackets paranthesis (\left and \right) （done)
%fix norms \| （done)
%cosines and sines （done)
%remove * (done)
%do the grid 1 - 5
%regenerate figures using aspect equals 
%meshses bigger sizes and axis numbers
%bold u and f even ofr the i j indexes (done)
%add notation section  (done)
%put brackets around the titles to fix captilizations 

%first paper -> mesh free method
%Third paper -> laplacian trick

%positivty (positive solution remains positive)
%report weight volume restriction
%SBP 

%\printbibliography
%\bibliographystyle{elsarticle-num}
\bibliographystyle{plain}
\bibliography{refs.bib}

\end{document}